\def\dd{{\rm d}}
\newcommand{\ov}[1]{\overline{#1}}
\def\weight(#1,#2){c_{#1,#2}}
\def\at{\tilde{a}}
\def\A{\mathcal{A}}
\def\B{\mathcal{B}}
\def\F{\mathcal{F}}
\def\G{\mathcal{G}}
\def\T{\mathcal{T}}
\def\eps{\varepsilon}
\def\Om{{\Omega}}
\def\half{\mbox{$\frac{1}{2}$}}
\def\1B{{\bf  1}}
\newcommand{\NN}{\mathbb{N}}
\newcommand{\ZZ}{\mathbb{Z}}
\newcommand{\RR}{\mathbb{R}}
\def\EE{\mathbb{E}}
\def\PP{\mathbb{P}}
\newcommand\be{\begin{equation}}
\newcommand\ee{\end{equation}}
\newcommand\ba{\begin{array}}
\newcommand\ea{\end{array}}
\newcommand{\bean}{\begin{eqnarray*}}
\newcommand{\eean}{\end{eqnarray*}}
\begin{document}

\title{A Semi-Lagrangian scheme for a modified version of the Hughes model for pedestrian flow
}

\titlerunning{A SL scheme for the Hughes model for pedestrian flow}        

\author{Elisabetta Carlini\and Adriano Festa \and Francisco J. Silva \and Marie-Therese Wolfram  }
 

\institute{E. Carlini \at
              Dipartimento di Matematica ``G. Castelnuovo'', Sapienza Universit\`a di Roma, \\ \email{carlini@mat.uniroma1.it}        
  \and
           A. Festa \at
              RICAM -- Johann Radon Institute
for Computational and Applied Mathematics, 
Austrian Academy of Sciences (\"OAW), 
              \email{adriano.festa@oaew.ac.at}
              \and 
              F.J. Silva \at
              XLIM - DMI 
UMR CNRS 7252
Facult\'e des Sciences et Techniques, Universit\'e de Limoges,  
\email{francisco.silva@unilim.fr}. 
\and
M-T. Wolfram \at
Mathematics Institute, University of Warwick, Coventry CV4 7AL and 
              RICAM -- Johann Radon Institute
for Computational and Applied Mathematics, 
Austrian Academy of Sciences (\"OAW), 
              \email{m.wolfram@warwick.ac.uk}
}

\date{Received: date / Accepted: date}

\maketitle

\begin{abstract}
In this paper we present a Semi-Lagrangian scheme for a regularized version of the Hughes model for pedestrian flow. Hughes originally proposed a coupled nonlinear PDE system describing the evolution
of a large pedestrian group trying to exit a domain as fast as possible. The original model corresponds to a system of a conservation law for the pedestrian density and an Eikonal equation to determine
the weighted distance to the exit. We consider this model in  presence of small diffusion and discuss the numerical analysis of the proposed
Semi-Lagrangian scheme. Furthermore we illustrate the effect of small diffusion on the exit time with various numerical experiments.
\keywords{Crowd motion \and mean field models \and Semi-Lagrangian schemes}
 \subclass{35Q91 \and  	65N75 \and  	60J20}
\end{abstract}

\section{Introduction}\label{intro}

\noindent   In the last decades crowd dynamics has attracted the attention of many researchers   in the scientific community. Starting from the field of applied physics and transportation research,  the motion
of pedestrian crowds raised more and more interest in the applied mathematics community. 

\noindent Mathematical models range from the microscopic level, where the individual dynamics 
are described separately, to the mesocopic and macroscopic level, where the distribution with respect to their velocity and/or position in space is considered.\\
Microscopic models are either force-based, such as the social force model proposed by Helbing and co-workers \cite{helbing1995social} or lattice based like the cellular automata models proposed in \cite{burstedde2001simulation,blue2001cellular}. On the macroscopic level the evolution of the pedestrian density is usually described by a conservation law, see for example  \cite{hughes2002continuum,colombo2005pedestrians,piccoli2011time,colombo2012,Degond}. In these models the velocity field may depend on the current local density, a given external potential and physical constraints due to walls and/or barriers. Recently mean field games, cf. \cite{huang2006large,LasryLions07}, have been proposed to model the evolution of large pedestrian crowds, see \cite{Lachapelle10}. These models can be derived from  stochastic optimal control problems for multi-agent systems as the number of individuals tends to infinity. For a detailed overview on different modeling approaches in pedestrian dynamics we refer to \cite{bellomo2011modeling,CPT_BOOK}.
  
\noindent In 2002 R. Hughes proposed a macropscopic model for pedestrian dynamics in \cite{hughes2002continuum}, which is based on a continuity equation (describing the evolution of the crowd density) and an Eikonal equation (giving the shortest weighted distance to an exit). It is given by
\be\label{hughes}
\left\{
\ba{ll}
\partial_t m(x,t)-\hbox{div}(m(x,t)\, f^2(m(x,t))\nabla u(x,t))=0,  \\[4pt]
|\nabla u(x,t)|=\displaystyle\frac{1}{f(m(x,t))},
\ea\right.\ee
where $x\in \Omega$ denotes the position in space, $t\in (0,T]$, $T\in \RR_+$ the time and   $\nabla$ the gradient with respect to the space variable $x$. The function $m$ corresponds to the pedestrian density and $u$ the weighted shortest distance to a target, for example an exit. Hughes proposed different functions penalizing regions of high density, the simplest choice being $f(m)=1-m$ where $1$ corresponds to the maximum scaled pedestrian density. In this work, we will assume that $f$ is a general smooth function. \\
System \eqref{hughes} is a highly nonlinear coupled system of partial differential equations. Few analytic results are available, all of them restricted to spatial dimension one. The main difficulty comes from the low regularity of the potential $u(x,t)$, which is only Lipchitz-continuous. For existence and uniqueness results of {a} regularized problem in 1D and the corresponding Riemann problem we refer to \cite{di2011hughes,amadori2012one,amadori2014existence}.

\noindent In this work we consider a modified version of \eqref{hughes}, which served as the basis for the $1$D analysis presented by Di Francesco et al. in \cite{di2011hughes}. It corresponds to
\be\label{principaleq}
\left\{
\ba{ll}
\partial_t  m(x,t) -  \eps \Delta m(x,t) - \mbox{div}(m(x,t)\, f^2(m(x,t))\nabla u(x,t) ) = 0, \\ 
-\eps \Delta u(x,t) + \half | \nabla u(x,t)|^{2} = \displaystyle\frac{1}{2f^2(m(x,t))+ \delta}.  
\ea \right.
\ee
in $\Omega \times (0,T)$.\\
The regularization parameter $\delta > 0$ prevents the blow-up of the cost when approaching the maximum density one. The  diffusive terms allow to use standard  analytical techniques from nonlinear PDE theory, see \cite{di2011hughes}. Diffusive phenomena have been observed and studied in pedestrian dynamics \cite{thompson1995computer,liu2015modeling}, giving an additional justification of the modification considered.\\
System \eqref{principaleq} has to be supplemented with suitable boundary and initial conditions. We consider an initial density $m_0$ of the agents satisfying that $m_0\geq 0$, $m_0\in L^{\infty}(\Omega)$ and the support of $m_0$ is a subset of $\Omega$.  Note that rescaling the density $m_0$, and possibly modifying the function $f$ in the equation,  we can assume that $\int_{\Omega}m_0(x)\dd x=1$. This normalization is useful in order to provide a probabilistic interpretation of the Fokker-Planck (FP) equation in   \eqref{principaleq}. Possible boundary conditions for the pedestrian density $m$ at the exit are:
\begin{itemize}
\item a given fixed outflow, corresponding to Neumann boundary condition,
\item an outflux which depends on the pedestrian density, hence a Robin boundary condition,
\item or a prescribed pedestrian density, giving a Dirichlet boundary condition.
\end{itemize}
Let $\T$ denote the common \emph{target/goal} of the crowd, which is a subset of the boundary i.e. $\T\subset\partial \Om$.
We set the pedestrian density to $m = 0$ at the target, hence individuals immediately leave the domain. On the rest of the boundary we impose homogeneous Neumann boundary conditions, i.e. individuals can not penetrate the walls. For the Eikonal equation we set $u=0$ at the target and a suitable Dirichlet boundary condition on the rest of the boundary.  The above conditions can be summarized as follows:
\be\label{BC}\left\{
\ba{ll}										 
m(x,0)= m_0(t),  &\text{ on  } \Omega\times\{0\},\\[4pt]
m(x,t) = 0, \; \;& \text{ on } \T \times (0,T),\\[4pt]
u(x,t) = 0, \; & \text{ on } \T \times (0,T),\\[4pt]
u(x,t)= g(x)  \; \; &\text{ on } \;  { \partial  \Omega \setminus\T} \times (0,T),  \\[4pt]
(\eps\nabla m + f^2(m)\nabla u \,m)(x,t)\cdot \hat{n} (x)= 0, \; \; &\text{ on } \partial \Omega\setminus\T \times (0,T),
\ea \right.
\ee
where $\hat n$ denotes the outer normal vector to the boundary, which is assumed to be { smooth}. {  Since the theoretical analysis of \eqref{principaleq}-\eqref{BC} has been done in \cite{di2011hughes} in 1D with homogeneous Dirichlet  boundary conditions, rather than tackling the theoretical analysis of  \eqref{principaleq}-\eqref{BC},  in this work we focus on the efficient numerical discretization  as we detail below.} 

\noindent Semi-Lagrangian (SL) schemes have been  successfully used to discretize Hamil\-ton-Jacobi-Bellman (HJB) equations, see \cite{falconeferrettilibro} and the references therein.
They are based on approximating the  characteristics of the problem. A SL scheme has been presented in \cite{CS15} to deal with linear  FP equations. In this work, we use a SL scheme to  numerically solve the stationary HJB equation in \eqref{principaleq}.  We propose an extension of the scheme in \cite{CS15} in order to deal with nonlinear FP equations posed on a bounded domain.   

 \noindent One of the main advantages of SL schemes is that they are explicit and   allow  large time steps. 
This is of special relevance since we are interested in the   behavior of the solutions for arbitrary values of the horizon $T$ which can be large (for example, if we are interested in the evacuation time). Moreover the SL discretization allows us to run stable simulations for small regularization parameters, closer in the spirit  to the original hyperbolic system proposed by Hughes.

\noindent This paper is structured as follows: in Section \ref{prelim} we introduce the necessary preliminaries, including the trajectiorial interpretation of both equations,  to  present and study the  SL discretizations in Section \ref{numerics}. In Section \ref{simulations} we illustrate the  influence of the diffusivity on different performance parameters, such  as the evacuation time of the crowd or the formation of congestions.

\section{Preliminaries}\label{prelim}

\noindent In this section we recall the stochastic optimal control interpretation of the HJB  { as well as the probabilistic interpretation of solutions of  FP  equations} and introduce some notations
used throughout this paper. 

\noindent Let $\Omega \subset \RR^d$ denote a bounded domain with { a smooth} boundary $\partial \Omega$. Assume that the common target of the crowd is on part of the boundary $\partial \Omega$, hence $ \T \subset \partial \Omega $.

 Let us consider a probability space $(\Omega, \F, \mathbb{F}, \mathbb{P})$ (where $\F$ is a $\sigma$-algebra, $\PP$ is a probability measure on $\F$, $\mathbb{F}:= (\F_{s})_{s \geq 0}$ is a filtration in $(\Omega, \F)$, i.e. $\F_{s} \subseteq \F $ for all $s \geq 0$ and $\F_{s_{1}} \subseteq  \F_{s_2}$ for all  $0\leq s_1\leq s_2$). We assume that $\mathbb{F}$ satisfies the usual hypothesis (see e.g. \cite{Protter-book}). We denote by $\EE$ the expectation operator in this probability space.

%

\paragraph{Trajectorial interpretation of the HJB equation.} It is well known  that the classical solution $u$ of the first  equation of \eqref{principaleq} can be represented as the value function of an associated stochastic optimal control problem, { which we recall now}. Given   a process $\alpha$ adapted to $\mathbb{F}$ (i.e. $\alpha(s)$ is $\F_s$-measurable for all $s$) and satisfying that $\EE\left(\int_{0}^{s}|\alpha(r)|^2 \dd r\right) < \infty$ for all $s\geq 0$ (we say that $\alpha$ is admissible), and  $x\in \ov{\Omega}$, we define
\be\label{fictivedynamics}
\ba{c}
y_{x,\alpha}(s) = x{+}\int_{0}^{s} \alpha(r) \dd r + \sqrt{2{\eps}}  W (s)   \hspace{0.5cm} \mbox{for all } \; s> 0, \\[6pt]
 \mbox{and } \; \;   \tau_{x,\alpha}:= \inf\{s > 0 \; ; \; y_{x,\alpha}(s) \in \partial \Om\}, 
\ea \ee  
where $W$ is a $d$-dimensional Brownian motion { adapted to $\mathbb{F}$}. Note that the time $\tau_{x,\alpha}$, which corresponds to the first time {the trajectory $y_{x,\alpha}$} leaves the domain $\Omega$, is a stopping time for the filtration $\mathbb{F}$ (i.e. $\{\tau_{x,\alpha}\leq s\} \in \F_s$ for all $s$). Let us fix $t\in [0,T]$. Classical results in stochastic control theory {\rm(}see e.g. {\rm \cite{FleSon92})} imply that, if $m(\cdot,t)$ is regular enough, then
\begin{align}\label{representationformulastochasticcontrol} 
\begin{split}
u(x,t)= \inf_{\alpha} \; \left\{\EE\left(\int_{0}^{\tau_{x,\alpha}}\left[ \half |\alpha(s)|^2 + (2{f^2(m(y_{x,\alpha}(s),t)})+ \delta)^{-1}\right]\dd s \right. \right.{}\\
+  g(y_{x,\alpha}(\tau_{x,\alpha})) \Big)\bigg\},
\end{split}
\end{align} 
and the optimal feedback law is given by {$ \alpha^{\ast} (x,t)= -\nabla u(x,t)$}   for all $s\geq 0$. The function $g$ is supposed to be strictly positive and taking sufficiently large values on $\partial \Om \setminus \T$ to {incite} that agents move towards the target $\T$. 

\noindent The dependence on the time variable $t$, {seen as a parameter in \eqref{representationformulastochasticcontrol}}, merits some additional comments. Indeed, the dependence of $u$ on $t$ is due exclusively to the  local density $m(x,t)$  on the right-hand-side of the HJB equation. This implies that  the trajectories $y_{x,\alpha} (\cdot)$ in  \eqref{fictivedynamics}  are {\it fictive}  {in the sense that  in the optimization process agents take into account the current pedestrian distribution $m(\cdot,t)$ only}.  This a fundamental difference to 
mean field game models (see \cite{LasryLions07,Lachapelle10,MR3199781}) and mean field type control problems (see \cite{MR3134900,MR3395471}), in which individuals anticipate the future dynamics of the crowd.

\paragraph{Trajectorial interpretation of the nonlinear FP equation.}  The trajectorial interpretation of the nonlinear FP equation is provided through stochastic differential equations of McKean-Vlasov type (or mean field type), see \cite{MR0221595,MR0233437,MR1431299,MR1108185}.   More precisely, let us consider the Stochastic Differential Equation (SDE)
\begin{equation}\label{Mackeanvlasov}
\ba{rcl} \dd X(t)&=& b(X(t), \mu(X(t),t),t)\,\dd t+ \sqrt{2\eps}\, \dd W(t), \hspace{0.4cm} \mbox{for all } \; t\geq 0, \\[6pt]
			X(0)&=& X^0,

\ea
\end{equation}
where $b: \RR^d \times \RR \times \RR_+ \to \RR^d$ is a regular vector-valued function,  $X^0$ is a random vector in $\RR^d$, independent of the Brownian motion $W(\cdot)$, with density $m_0$, and  $\mu(\cdot,t)$ is the density of $X(t)$. 
It can be shown  (see \cite{MR1653393}) that \eqref{Mackeanvlasov} admits a unique solution and that  $\mu$ is the unique classical solution of the nonlinear FP equation
\be\label{nonlinearFP}\ba{rcl}
\partial \mu -\eps\Delta \mu+ \mbox{div}(b(x,\mu,t) \mu) =0 \hspace{0.3cm} &\mbox{in }& \; \RR^d \times [0,\infty[, \\[6pt]
\mu(\cdot, 0)= m_0(\cdot) \hspace{0.3cm} &\mbox{in}& \;\RR^d.\ea
\ee
Therefore, if we set
\be\label{driftb} b(x,m,t):=-\nabla u(x,t)f^2(m(x,t))\ee
and  working on $\RR^d$ instead of $\Om$,  
equation \eqref{Mackeanvlasov} provides a formal probabilistic interpretation of the second   equation in \eqref{principaleq} with $m(\cdot,t)$ being the density of $X(t)$. Let us point out that the interpretation is a priori only  heuristic since $u$ depends  implicitly on $m$. Therefore the definition of $b$ in \eqref{driftb} does not actually fit the framework of \cite{MR1653393}, where the dependence on the density is explicit. \\
\noindent The probabilistic interpretation sketched above  is the basis of our SL scheme to solve \eqref{principaleq},  presented in the next section.   To include boundary conditions in the FP equation in  \eqref{principaleq} we reflect the discrete trajectories at $\partial \Om \setminus \T$ and truncate them at $\T$, see {\cite{Bossy2004,Gobet2001}}. 
 \\
\noindent
Finally, note that  in contrast to Mean Field Games, the model considered in this work does not impose dual boundary conditions for the HJB and the FP equation.

%

\section{The numerical scheme}\label{numerics}
In this section we propose a SL scheme to approximate the solution of \eqref{principaleq}. The crucial point is the discretization of the nonlinear FP equation, which is based on the fact that its solution is a measurable selection of the time-marginal densities of the diffusion defined by \eqref{Mackeanvlasov}  (see \cite{MR1653393}). We will first propose a SL scheme for a general nonlinear FP equation with smooth coefficients and a given velocity field depending explicitely on the density of the underlying stochastic process. We will prove that our scheme is consistent in an appropriate sense. The main feature of the scheme, which can be seen as an extension to the nonlinear case of the scheme proposed in \cite{CS15,CS13},  is that it is explicit and, at the same time, allows  large time steps. This is not the case for e.g. explicit finite-difference schemes where the consistency property is achieved under {the classical parabolic} CFL condition.

\noindent In the case of system \eqref{principaleq} the velocity field in the nonlinear FP equation depends implicitly on the density $m$ through the solution $u$ of the HJB. Therefore, in order to find an approximation of the velocity field we must solve the stationary  HJB equation at each time step. This is done in Section  \ref{hjbdiscretization}, where an   adaptation of the fully-discrete scheme proposed in \cite{CamFal95}, taking into account the Dirichlet boundary condition is presented.   Finally, in Section \ref{semilagrangiancomplete} we merge both schemes to provide the fully-discrete scheme for \eqref{principaleq}.

\noindent  Let us begin by introducing some standard notation. For simplicity, we  suppose that $\Omega=(0,L)^d$. Even if  this set $\Omega$ (and also the domains considered in the numerical simulations) {has not a smooth boundary}, we prefer to work on a square domain  in order to simplify the scheme.  Given a time step $\Delta t>0$ and a space discretization parameter $\Delta x>0$, let $M\in \NN$ and $N\in \NN$ be such that $M \Delta x=L$ and $N\Delta t=T$. Let us set $(x_{i}, t_{k}):= (i \Delta x , k \Delta t)$, where $i\in \{0,\hdots,M\}^{d}$ and  $k=0, \hdots, N$. For a given $A\subseteq \Omega$ we set $\mathcal{G}_{\Delta x} (A):=\{i \in \{0,\hdots, M\}^{d} \; : \; x_i\in A \}$ and call   $B(\mathcal{G}_{\Delta x}(A))$ and $B(\mathcal{G}_{\Delta x,\Delta t}(A))$   the spaces of  grid functions defined on $\{x_i : i\in \mathcal{G}_{\Delta x}(A) \}$ and $\{(x_i,t_k), i\in \mathcal{G}_{\Delta x} (A), k=0,\hdots, N\}$ respectively.

\noindent  Given a standard uniform triangulation  of $\ov \Omega$ with   vertices  belonging to $\mathcal{G}_{\Delta x}(\ov \Omega)$, we denote by $\{{\beta_{i}} \; ; \; i \in \mathcal{G}_{\Delta x}(\ov \Omega)\}$ the set of $\mathbb{P}_1$-basis functions associated to this  triangulation. We recall that  ${\beta_i}$  are continuous functions, affine on each simplex and $\beta_i(x_j)=\delta_{i j}$ for all $j \in \mathcal{G}_{\Delta x} (\ov{\Omega}) $   (where $\delta_{i,j}$ denotes the Kronecker symbol). Moreover, the functions $\beta_i$ have compact support and satisfy that  $0\leq \beta_i \leq 1$ and $\sum_{i\in  \mathcal{G}_{\Delta x} (\ov{ \Omega})}\beta_i(x)=1$ for all $x\in \ov\Omega$. We consider the following  linear interpolation operator on $\ov{\Omega}$ 
\be\label{definterpolation}
I[u](\cdot):=\sum_{i\in  \mathcal{G}_{\Delta x} (\ov{\Omega})}u(x_i)\beta_i(\cdot) \hspace{0.2cm} \mbox{for } u \in B(\mathcal{G}_{\Delta x}(\ov{\Omega})).
\ee


\subsection{A Semi-Lagrangian scheme for a nonlinear Fokker-Planck equation }
In this section we propose a SL scheme to numerically solve the following nonlinear FP equation
 \be\label{FPb}\left\{
\ba{ll}
 \partial_t  m -  \eps \Delta m + \mbox{div}(m\, b(x,m,t) ) = 0&\hspace{0.3cm} \mbox{in $\RR^d \times (0,T)$}, \\[6pt]							 
 m(\cdot,0)= m_0(\cdot)  &\hspace{0.3cm} \mbox{in $\RR^d$},
\ea \right.
\ee
where  
$b:\Omega\times\RR \times[0,T]\to\RR^d$ is a given smooth vector field, depending on $m$. By an abuse of notation we denote by $m_0$ the smooth initial datum, now defined on $\RR^d$ with compact support.
 
\noindent In order to formally derive the scheme, we multiply the first equation in \eqref{FPb} by a smooth test function  ${\phi}$ with compact support 
and integrate by parts to  get:
\begin{align}\label{weaksol}
&\int_{\RR^d} \phi(x) m(x,t_{k+1})\dd x=\int_{\RR^d}\phi(x) m(x,t_{k})\dd x \\
&+\int_{t_k}^{t_{k+1}} \int_{\RR^d}
 [ b(x,m(x,t),t)\cdot  \nabla \phi(x)+\eps \Delta \phi(x) ] m(x,t)\dd x\dd t.\nonumber
 \end{align}
We first approximate \eqref{weaksol} as
 \begin{align*}
\int_{\RR^d} \phi(x) &m(x,t_{k+1})\dd x=\\
&\int_{\RR^d}[\phi(x)+ \Delta t
b(x,m(x,t_k),t_k) \cdot \nabla \phi(x)+{\Delta t}\eps \Delta \phi(x) ] m(x,t_k)\dd x . 
 \nonumber
 \end{align*}
 Using a Taylor expansion we obtain
 \begin{align*}
\int_{\RR^d} \phi(x) &m(x,t_{k+1})\dd x=\\
&\frac{1}{2d}\sum_{\ell=1}^{d} \int_{\RR^{d}}[\phi(x+ \Delta t
 b(x,m(x,t_k),t_k)+ \sqrt{{2d\eps}\Delta t} {\bf{e}}_\ell )] m(x,t_k)\dd x +\nonumber\\
&\frac{1}{2d}\sum_{\ell=1}^{d} \int_{\RR^d}[\phi(x+\Delta t
 b(x,m(x,t_k),t_k)- \sqrt{{2d\eps}\Delta t }{\bf{e}}_\ell) ] m(x,t_k)\dd x,\nonumber
 \nonumber
 \end{align*}
 where ${\bf e}_{\ell}$ denotes the $\ell$-th canonical vector in $\RR^d$.\\
We define
\begin{align}\label{e:approxm} 
\ba{c}E_i=[x_i^1- \half \Delta x, x_i^1+ \half \Delta x]\times \hdots \times [x_i^d- \half \Delta x, x_i^d+ \half \Delta x], \\[6pt]
m_{i,k}:= {\frac{1}{(\Delta x)^{d}}}\int_{E_i} m(x,t_k) \dd x.\ea
\end{align}
 {Approximating the integrals of the form  $ \int_{E_j}c(x)m(x,t_{k'})\dd x$ by sums  $(\Delta x)^d c(x_j) m_{j,k'}$, where $c$ is a smooth function, $j\in \ZZ^d$ and $k'=0, \hdots, N$, we get}
 \begin{align}\label{sum}
\sum_{{j\in \ZZ^d}} &\phi(x_j) m_{j,k+1}=\\
&\frac{1}{2d} \sum_{\ell=1}^{d}\sum_{j\in \ZZ^d} \phi( \Phi^{\ell,+}_{j,k}[m(x_j,t_k)])  m_{j,k}+
\frac{1}{2d}  \sum_{\ell=1}^{d}\sum_{j\in \ZZ^d}  \phi( \Phi^{\ell,-}_{j,k}[m(x_j,t_k)] ) m_{j,k},
 \nonumber
 \end{align}
 where, for $\mu \in \RR$, $j\in \ZZ^d$, $k=0,\hdots, N-1$ and $\ell=1,\hdots, d$, { we have defined}
\be\label{car} 
\Phi^{\ell,\pm}_{j,k}[\mu]:= x_{j}+\Delta t \,b(x_j,\mu,t_k)\pm  \sqrt{2d\eps \Delta t}  {\bf e}_{\ell}.
\ee
Given $i\in \ZZ^d$ setting $\phi=\beta_i$ in { \eqref{sum}, we have}
\be\label{sum2}
m_{i,k+1}= \frac{1}{2d}  \sum_{j\in \ZZ^d}\sum_{\ell=1}^{d}
\left(\beta_{i} (\Phi ^{\ell,+}_{j,k}[m(x_{j},t_{k})])
       +\beta_{i}(\Phi^{\ell,-}_{j,k}[m(x_{j},t_{k})]) \right) m_{j,k}.
\ee
Finally, since $ m_{i,k} \simeq m(x_i,t_k)$,  setting $m_k=(m_{i,k})_{i\in \ZZ^d}$, { \eqref{sum2}} gives the following explicit scheme for $m_{i,k}$:  
\be\label{schemefp}
\ba{rl}
m_{i,k+1}&= G(m_k,i,k) \hspace{0.4cm} \forall \; k=0,\hdots, N-1, \; \; i\in \ZZ^{d}, \\[8pt]
m_{i,0}&=\frac{ \int_{E_{i}}m_{0}(x) \dd x}{(\Delta x)^d}  \hspace{0.4cm} \forall i\in \ZZ^{d},
\ea 
\ee
in which the nonlinear operator $G$ is defined by
\be\label{definicionG}
 G (w,i,k) := \frac{1}{2d}  \sum_{j\in \ZZ^d}\sum_{\ell=1}^{d}
\left(\beta_{i} \left(\Phi ^{\ell,+}_{j,k}\left[ w_j \right]\right)
       +\beta_{i}\left(\Phi^{\ell,-}_{j,k}\left[ w_j\right]\right) \right) w_j,
\ee 
  for every $w \in B(\ZZ^d)$. Because of the explicit in time discretization the scheme is well-defined. Given the solution $m_{i,k}$ of \eqref{schemefp}, we associate the function $m_{\Delta x, \Delta t}: \RR^{d}\times [0,T]   \to \RR$ defined as:
\be\label{definicionmdensity}
m_{\Delta x, \Delta t}(x,t):= m_{i,k} \hspace{0.3cm} \mbox{if $x\in E_i$ and $t\in [t_{k}, t_{k+1}[$,  $i\in \ZZ^d$, $k=0,\hdots,N$}.
\ee
Note that the scheme is conservative by definition, i.e.
\begin{align*}
\int_{\RR^{d}} m_{\Delta x, \Delta t}(x,t_k) \dd x= (\Delta x)^d\sum_{i\in \ZZ^d}m_{i,k} = \int_{\RR^{d}} m_{0}(x) \dd x \quad  \text{ for all } k=1,\hdots, N. 
\end{align*}
We extend \eqref{definicionG} to $B(\ZZ^d)\times \RR^d\times [0,T]$ by { defining} 
$$
\ba{rl}
G_{\Delta x,\Delta t}(v,x,t):=& G(v,i,k) \hspace{0.3cm}\mbox{if $x\in E_i$} \\[6pt]
\;  & \mbox{and $t\in [t_{k}, t_{k+1}[$,  $i\in \ZZ^d$, $k=0,\hdots,N-1$.}
\ea
$$ 
Following similar computations as in the derivation of the scheme, we can prove that \eqref{schemefp} is consistent. {The consistency result in the following Proposition is called} {\it weak}  {in order} to
underline  consistency to the weak formulation {of \eqref{FPb}}. 

\begin{proposition}[Weak consistency]\label{consistenciadebil} Assume that $m: \RR^d \times [0,T] \to \RR_+$ satisfies:
\begin{itemize}
\item[\textbullet] $\int_{\RR^d}m(x,t)\dd x$ is uniformly bounded in $[0,T]$. \vspace{0.1cm}
\item[\textbullet]     For all $t \in [0,T]$, $m(\cdot,t)\in C^{2}(\RR^{d})$ and for all $x\in \RR^d$, $m(x,\cdot)$ is Lipschitz with a Lipschitz constant independent of $x$.
\end{itemize} 
Set  $ m_{i,k}$ and $m_{\Delta x, \Delta t}$ as in  \eqref{e:approxm} and \eqref{definicionmdensity}.
Then, assuming that   $b$ is Lipschitz, for every 
$\phi \in  C_{0}^{\infty}\left(\RR^d\right)$ and $k=0, \hdots, N$ we obtain 
\be\label{consistencyseparada}
\int_{\RR^{d}} \phi(x) m_{\Delta x, \Delta t}(x,t_k)\dd x = \int_{\RR^{d}}\phi(x) m(x,t_k) \dd x + O(\Delta x), 
\ee 
and  for $k=0, \hdots, N-1$ 
\begin{align}\label{consistencyseparada2}
\begin{split}
\int_{\RR^{d}}& \phi(x) G_{\Delta x,\Delta t}(m_k, x, t_k) \dd x\\
&= \int_{\RR^{d}} \phi(x) m(x,t_k)\dd x  + \int_{t_k}^{t_{k+1}} \int_{\RR^{d}} b(x,m(t,x),t) \cdot \nabla \phi(x) m(x,t) \dd x \dd t \\
&+\int_{t_k}^{t_{k+1}} \int_{\RR^{d}} \eps \Delta \phi(x)m(x,t)\dd x \dd t + O(\Delta x +(\Delta t)^2).
\end{split}
\end{align}
In particular, if $m$ is differentiable w.r.t. to the time variable and if $(\Delta x_n, \Delta t_n)$ is a sequence of space and time steps such that
\begin{align*}
(\Delta x_n, \Delta t_n)\to 0 \text{ and }\Delta x_n / \Delta t_n \to 0
\end{align*} as $n\to \infty$, then   
\begin{align}\label{limitedebil}
\begin{split}
&\lim_{n\to \infty}\frac{1}{\Delta t_n} \int_{\RR^{d}} \phi(x)\left[ m_{\Delta x_n, \Delta t_n}(x,t_{k^n+1})- G_{\Delta x_n,\Delta t_n}(m_{k^n}, x, t_{k^n}) \right]\dd x \\
&~~= \int_{\RR^{d}} \phi(x) \left[ \partial_{t}m(x,t)-\eps \Delta m(x,t)+ \mbox{{\rm div}} \left(b(x,m(x,t),t) m(x,t) \right)\right] \dd x,
\end{split}
\end{align}
for $k^n$ such that $t_{k^n} \to t$.
\end{proposition}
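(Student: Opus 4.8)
\noindent The plan is to establish the three statements in the order given, the middle one \eqref{consistencyseparada2} absorbing essentially all of the work; throughout, the implied constants depend on $\phi$, on $b$ and on suitable norms of $m$, but not on $\Delta x,\Delta t$. Estimate \eqref{consistencyseparada} is a plain quadrature bound: since $m_{\Delta x,\Delta t}(\cdot,t_k)$ equals the cell average $m_{i,k}$ of $m(\cdot,t_k)$ on each $E_i$, one has $\int_{\RR^d}\phi\,m_{\Delta x,\Delta t}(\cdot,t_k)\dd x-\int_{\RR^d}\phi\,m(\cdot,t_k)\dd x=\sum_{i\in\ZZ^d}\int_{E_i}\phi(x)\big(m_{i,k}-m(x,t_k)\big)\dd x$, and since $m(\cdot,t_k)\in C^1$ the integrand on each $E_i$ is $O(\Delta x)$; because $\phi$ is compactly supported only $O((\Delta x)^{-d})$ cells contribute, each of measure $(\Delta x)^d$, which gives $O(\Delta x)$.

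\noindent For \eqref{consistencyseparada2} the idea is to retrace the formal derivation of the scheme keeping track of every remainder. Starting from $\int_{\RR^d}\phi\,G_{\Delta x,\Delta t}(m_k,\cdot,t_k)\dd x=\frac{1}{2d}\sum_{\ell=1}^{d}\sum_{j\in\ZZ^d}m_{j,k}\sum_{i\in\ZZ^d}\big(\int_{E_i}\phi\big)\big(\beta_i(\Phi^{\ell,+}_{j,k}[m_{j,k}])+\beta_i(\Phi^{\ell,-}_{j,k}[m_{j,k}])\big)$, I would first invoke the identity $\sum_{i}\beta_i(y)\int_{E_i}\phi(x)\dd x=(\Delta x)^d\phi(y)+O((\Delta x)^{d+2})$ (midpoint rule combined with the $\mathbb{P}_1$-interpolation bound $I[\phi]=\phi+O((\Delta x)^2)$); since $\sum_j(\Delta x)^dm_{j,k}=\int_{\RR^d}m(\cdot,t_k)\dd x$ is uniformly bounded, this reduces the right-hand side to $\tfrac{(\Delta x)^d}{2d}\sum_\ell\sum_j m_{j,k}\big(\phi(\Phi^{\ell,+}_{j,k}[m_{j,k}])+\phi(\Phi^{\ell,-}_{j,k}[m_{j,k}])\big)+O((\Delta x)^2)$. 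Then I would replace $m_{j,k}$ by $m(x_j,t_k)$ inside the characteristics (legitimate since $|m_{j,k}-m(x_j,t_k)|=O((\Delta x)^2)$ and $b,\phi$ are Lipschitz, at a cost of $O(\Delta t(\Delta x)^2)$), rewrite $(\Delta x)^dm_{j,k}=\int_{E_j}m(\cdot,t_k)\dd x$, and recognise Riemann sums: with $\psi^{\ell,\pm}_k(x):=\phi\big(x+\Delta t\,b(x,m(x,t_k),t_k)\pm\sqrt{2d\eps\Delta t}\,{\bf e}_\ell\big)$, so that $\phi(\Phi^{\ell,\pm}_{j,k}[m(x_j,t_k)])=\psi^{\ell,\pm}_k(x_j)$, one has $\sum_j\psi^{\ell,\pm}_k(x_j)\int_{E_j}m(\cdot,t_k)\dd x=\int_{\RR^d}\psi^{\ell,\pm}_k(x)\,m(x,t_k)\dd x+O(\Delta x)$ --- and it is precisely this step, replacing a node value by the function for a merely Lipschitz integrand, that fixes the spatial accuracy at first order.

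\noindent It then remains to Taylor expand $\tfrac{1}{2d}\sum_{\ell=1}^{d}\big(\psi^{\ell,+}_k(x)+\psi^{\ell,-}_k(x)\big)$. Writing the argument of $\phi$ as $x+a\pm h$ with $a=\Delta t\,b(x,m(x,t_k),t_k)=O(\Delta t)$ and $h=\sqrt{2d\eps\Delta t}\,{\bf e}_\ell$, the key observation is that odd powers of $h$ cancel in $\phi(x+a+h)+\phi(x+a-h)$, which therefore equals $2\phi(x+a)+2d\eps\Delta t\,\partial^2_{x_\ell}\phi(x+a)+O(|h|^4)=2\phi(x+a)+2d\eps\Delta t\,\partial^2_{x_\ell}\phi(x+a)+O((\Delta t)^2)$; summing over $\ell$, dividing by $2d$ and expanding once more in $a$ gives $\phi(x)+\Delta t\,b(x,m(x,t_k),t_k)\cdot\nabla\phi(x)+\eps\Delta t\,\Delta\phi(x)+O((\Delta t)^2)$. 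Multiplying by $m(x,t_k)$ and integrating, \eqref{consistencyseparada2} follows after one temporal quadrature: the maps $t\mapsto\int_{\RR^d}b(x,m(x,t),t)\cdot\nabla\phi(x)\,m(x,t)\dd x$ and $t\mapsto\int_{\RR^d}\eps\Delta\phi(x)\,m(x,t)\dd x$ are Lipschitz on $[t_k,t_{k+1}]$ (by the Lipschitz-in-time hypothesis on $m$, boundedness of $m$ on $\mathrm{supp}\,\nabla\phi$, and Lipschitz continuity of $b$), so $\Delta t$ times their value at $t_k$ equals their integral over $[t_k,t_{k+1}]$ up to $O((\Delta t)^2)$.

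\noindent Finally, \eqref{limitedebil} follows by combining the two estimates: applying \eqref{consistencyseparada} at $t_{k^n+1}$ and \eqref{consistencyseparada2} at $t_{k^n}$, subtracting, and using $m(x,t_{k^n+1})-m(x,t_{k^n})=\int_{t_{k^n}}^{t_{k^n+1}}\partial_t m(x,t)\dd t$ (permitted by the time-differentiability of $m$), the bracketed quantity becomes $\int_{t_{k^n}}^{t_{k^n+1}}F(t)\dd t+O(\Delta x_n+(\Delta t_n)^2)$ with $F(t):=\int_{\RR^d}\Big(\phi(x)\,\partial_t m(x,t)-b(x,m(x,t),t)\cdot\nabla\phi(x)\,m(x,t)-\eps\,\Delta\phi(x)\,m(x,t)\Big)\dd x$. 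Dividing by $\Delta t_n$, the remainder is $O(\Delta x_n/\Delta t_n+\Delta t_n)\to0$, while $\tfrac{1}{\Delta t_n}\int_{t_{k^n}}^{t_{k^n+1}}F\to F(t)$ because $t\in[t_{k^n},t_{k^n+1}]$ and this interval shrinks nicely to $t$ (Lebesgue differentiation theorem; at worst one restricts to a Lebesgue point of $F$); an integration by parts in $F$, licit since $\phi$ is compactly supported and $m(\cdot,t)\in C^2$, turns $-\int b\cdot\nabla\phi\,m\,\dd x$ into $\int\phi\,\mathrm{div}(b\,m)\,\dd x$ and $-\int\eps\Delta\phi\,m\,\dd x$ into $-\int\eps\phi\,\Delta m\,\dd x$, which is exactly the right-hand side of \eqref{limitedebil}. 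The main obstacle will be the bookkeeping in \eqref{consistencyseparada2}: one has to verify that each successive approximation (the combined quadrature/interpolation step, the substitution $m_{j,k}\leftrightarrow m(x_j,t_k)$, the Riemann-sum step, and the temporal quadrature) contributes at most $O(\Delta x+(\Delta t)^2)$, relying throughout on the uniform mass bound $\sum_j(\Delta x)^dm_{j,k}=O(1)$, and in particular that the symmetry of the displacements $\pm\sqrt{2d\eps\Delta t}\,{\bf e}_\ell$ kills the would-be $O((\Delta t)^{3/2})$ term, so that the consistency is genuinely of second order in time.
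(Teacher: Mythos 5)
Your proposal is correct and follows essentially the same route as the paper: cell-average quadrature for \eqref{consistencyseparada}; then, for \eqref{consistencyseparada2}, the combination of the midpoint rule with the $\mathbb{P}_1$-interpolation identity $I[\phi](y)=\phi(y)+O((\Delta x)^2)$, an interchange of sums, the symmetric two-point Taylor expansion in the displacements $\pm\sqrt{2d\eps\Delta t}\,{\bf e}_\ell$ (which cancels the $O((\Delta t)^{3/2})$ term), a Riemann-sum/temporal-quadrature step; and finally \eqref{limitedebil} by subtraction and integration by parts. The only cosmetic deviation is that you replace $m_{j,k}$ by $m(x_j,t_k)$ inside the characteristic feet early on (at cost $O(\Delta t(\Delta x)^2)$), whereas the paper carries $m_{j,k}$ through and absorbs the substitution into the final remainder.
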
 \smallskip

\begin{proof} Let $C=\mbox{supp}(\phi)$, which is a compact set. By definition 
\begin{align*}
\int_{\RR^{d}} \phi(x) m_{\Delta x, \Delta t}(x,t_k)\dd x&= \sum_{i \in  \G_{\Delta x}(C)}  m_{i,k} \int_{E_i} \phi(x) \dd x \\[6pt]
\; &=  \sum_{i \in \G_{\Delta x}(C)}  \int_{E_{i}} m(x_i,t_k)\phi(x) \dd x  + O((\Delta x)^2)\\[6pt]
\; &= \sum_{i \in \G_{\Delta x}(C)} \int_{E_{i}} m(x,t_k)\phi(x) \dd x  + O(\Delta x+(\Delta x)^2)\\[6pt]
\; &= \int_{\RR^{d}} m(x,t_k) \phi(x) \dd x +O(\Delta x)
\end{align*}
where, we have used that
\begin{equation*}
 m_{i,k} = \frac{1}{(\Delta x)^d}\int_{E_i}m(x,t_{k})\dd x=m(x_i,t_k)+O(\Delta x^2),
\end{equation*}
which holds true by a Taylor expansion, since $m$ is regular. On the other hand, 
\begin{align*}
\int_{\RR^{d}} \phi(x) G_{\Delta x,\Delta t}(m_k, x, t_k) \dd x&= \sum_{ i \in \G_{\Delta x}(C)} G(m_k,i,k) \int_{E_i} \phi(x) \dd x. 
\end{align*}
Now,
\begin{multline*}
  G(m_k,i,k)\int_{E_i}\phi(x) \dd x = \sum_{j\in \ZZ^d}\frac{1}{2d} \sum_{l=1}^{d} \sum_{s \in \{+,-\}} \beta_{i}(\Phi_{j,k}^{\ell,s}[m_{j,k}] )m_{j,k}\int_{E_i}\phi(x) \dd x,\\[6pt]
 =\sum_{j\in \ZZ^d}\frac{1}{2d} \sum_{l=1}^{d} \sum_{s \in \{+,-\}} \beta_{i}(\Phi_{j,k}^{\ell,s}[m_{j,k}]) \int_{E_j}m(x,t_k)\dd x  \phi(x_i)  + O(\Delta x),
\end{multline*}
  
\noindent where we have used that 
$$\frac{1}{(\Delta x)^d} \int_{E_{i}} \phi(x) \dd x= \phi(x_i) + O(\Delta x).$$
Therefore, since 
$$ \sum_{i \in \G_{\Delta x}(C)} \beta_{i}(\Phi_{j,k}^{\ell,s}[m_{j,k}]) \phi(x_i)= I[\phi]( \Phi_{j,k}^{\ell,s}[m_{j,k}])= \phi( \Phi_{j,k}^{\ell,s}[m_{j,k}])+O((\Delta x)^2),$$
interchanging the sums w.r.t. $i$ and $j$, we get 
\begin{multline}\label{antesdetaylor}
\int_{\RR^{d}} \phi(x)\; G_{\Delta x,\Delta t}(m_k, x, t_k) \dd x\\
= \sum_{j\in \ZZ^d}\int_{E_j}m(x,t_k)\dd x \frac{1}{2d} \sum_{l=1}^{d} \sum_{s \in \{+,-\}}\phi( \Phi_{j,k}^{\ell,s}[m_{j,k}]) 
 +O((\Delta x)^2+\Delta x).
\end{multline}
Note that for $x\in E_j$
\begin{align*}
\frac{1}{2d} \sum_{l=1}^{d}& \sum_{s \in \{+,-\}}\phi( \Phi_{j,k}^{\ell,s}[m_{j,k}])\\
&= \phi(x_j)+\Delta tb(x_j,m_{j,k},t_k) \cdot \nabla \phi(x_j) + \eps \Delta t \Delta \phi(x_j) + O((\Delta t)^2), \\
&= \phi(x)+ \int_{t_k}^{t_{k+1}}\left[b(x,m(x,t),t)\cdot \nabla \phi(x) + \eps  \Delta \phi(x)\right]\dd t \\
& \phantom{= \phi(x)} + O(\Delta x+ \Delta x \Delta t+ (\Delta t)^2).
\end{align*}
The  equality in \eqref{consistencyseparada2} follows easily from the relation above and \eqref{antesdetaylor}. Finally, relation \eqref{limitedebil} follows directly from \eqref{consistencyseparada} and an integration by parts in the space variable. 
\hfill \small $\blacksquare$ \normalsize
\end{proof}

\noindent We conclude by discussing the implementation of Dirichlet and Neumann boundary conditions. Hence we consider the nonlinear FP on the bounded domain with mixed Dirichlet and Neumann boundary conditions
 \be\label{FPwithboundaryconditions}\left\{
\ba{ll}
 \partial_t  m -  \eps \Delta m + \mbox{div}(m\, b(x,m,t) ) = 0&\hspace{0.3cm} \mbox{in $\Om \times (0,T)$}, \\[6pt]							 
 m(\cdot,0)= m_0(\cdot)  &\hspace{0.3cm} \mbox{in $\Om$},\\[6pt]
 m= 0&\hspace{0.3cm}\mbox{in $\T$},\\[6pt]
 (\eps\nabla m - b(x,m,t) m)\cdot \hat{n} =  0, \; \; &\hspace{0.3cm} \mbox{on $\partial \Omega\setminus\T \times (0,T)$} \\[6pt]
\ea \right.
\ee

\noindent Note that $\Phi$ defined in \eqref{car} (with $\mu=m(x_j,t_k)$) can be interpreted as a single Euler step in time of
\be\label{SP}\ba{rcl}
\dd X(s)&=&b\left(X(s),m(X(s),s),s\right)\dd s+\sqrt{2\eps}\dd W(s), \hspace{0.5cm} s \in (t_k,t_{k+1}),\\[6pt]
X(t_k)&=&x_j,\ea
\ee
with a random walk discretization of the Brownian motion $W(\cdot)$.
Indeed, considering a random vector $Z$ in $\RR^d$ such that for all $\ell=1,\dots,d$
\begin{align}
\label{defZ}
\begin{split}
&\PP( Z^{\ell}=1)=  \PP( Z^{\ell}=-1)= \frac{1}{2d} \\
&\PP\Big( \bigcup_{1\leq \ell_1 < \ell_2 \leq d } \{  Z^{\ell_1}\neq 0\} \cap \{  Z^{\ell_2}\neq 0\}  \Big)=0,
\end{split}
\end{align}
the function $\Phi_{j,k}^{\ell,\pm}[m(x_j,t_k)]$ corresponds to one realization of 
$$x_j+\Delta t b(x_j,m(x_j,t_k),t_k)+ \sqrt{2d \eps \Delta t }Z.$$ 
{Taking into account the boundary conditions in \eqref{FPwithboundaryconditions},  the discretization of} the stochastic process \eqref{SP} driving the evolution of the density $m(\cdot,t)$ has to be reflected in $\partial \Omega\setminus\T \times (0,T)$ and truncated at $\T$, as
for example in \cite{Gobet2001,Bossy2004} in the case of a consistent Monte-Carlo simulation. 

\noindent Let us introduce the notation  $s\wedge t:=\min\{s,t\}$ for $t$ and $s$ belong  to $\RR$, with the convention that  $+\infty \wedge s=s$ for all $s\in [0,\infty)$.
In order to proceed
 for $i\in \G_{\Delta x}(\Omega)$ and  $\mu\in \RR$, denoting for all $\ell=1,\dots,d$
\begin{align*} 
&\widehat{\Delta t}_{i}^{\ell, \pm }=  \inf\{ \gamma>0 \; ;  \; x+\gamma  b(x_i, \mu,t_k)\pm  \sqrt{2  d\eps  \gamma}{\bf e}_{\ell} \in\T\} \wedge \Delta t, 
\end{align*}
we redefine \eqref{car} as
\begin{equation*} \label{Projcar1} 
 \Phi_{i,k}^{\ell,\pm} [ \mu ] := x_{i}+\widehat{\Delta t}_{i}^{\ell,\pm}b(x_i,\mu,t_k)\pm\sqrt{2d\eps \widehat{\Delta t}_{i}^{\ell,\pm}}  {\bf {e}}_{\ell}.
\end{equation*}
We approximate the Neumann boundary condition in $\partial \Om \setminus \T$ by  the {\em symmetrized Euler scheme}  proposed in \cite{Bossy2004}.
 We define the  symmetrized characteristics  as $P( \Phi_{i,k}^{\ell,\pm} [ \mu ])$, where 
 $P:\RR^d\to \Omega$  is defined as (see   Fig. \ref{proje})
\be\label{Projection} 
 P (z):=\begin{cases}
z, &{\rm{if}}\;z\in \ov{ \Omega},\\
2w^*-z, \qquad\, \mbox{where } \; w^*:=\underset{w\in\Omega}{\rm{argmin}} |z-w|,&{\rm{if}}\;z\notin \ov{ \Omega}.
\end{cases}
\ee
We get the following scheme to approximate \eqref{FPwithboundaryconditions}
\be\label{schemefpwithboundary}\ba{rcl}
m_{i,k+1}&=& G(m_k, i,k) \hspace{1cm}\; \mbox{if } \;i\in \mathcal{G}(\ov \Omega\setminus \T), \\[6pt]
m_{i,k+1}&=&0  \hspace{2.58cm} \mbox{if } i\in \G_{\Delta x}(\T),\\[6pt]
m_{i,0}&=& \frac{\int_{E_{i}} m_{0}(x) \dd x}{(\Delta x)^d} \hspace{1.1cm} \mbox{for all  }  i\in \G_{\Delta x}(\ov \Omega),
\ea
\ee
where $G$ is redefined accordingly as
$$G(m_k, i,k):= \frac{1}{2d}  \sum_{j\in \ZZ^d }\sum_{\ell=1}^{d}
\left[\beta_{i} \left( P( \Phi_{j,k}^{\ell,+} [m_{j,k}]  )\right)
       +\beta_{i}\left( P ( \Phi_{j,k}^{\ell,-} [m_{j,k}])\right)  \right] m_{j,k}.
$$
\begin{remark} Evidently, if we consider the scheme \eqref{schemefpwithboundary} the statements of Proposition  \ref{consistenciadebil} still hold true for test functions with compact support {\it strictly contained} in $\Omega$.
\end{remark}
\begin{figure}[t]
\begin{center}
\includegraphics[height=5cm]{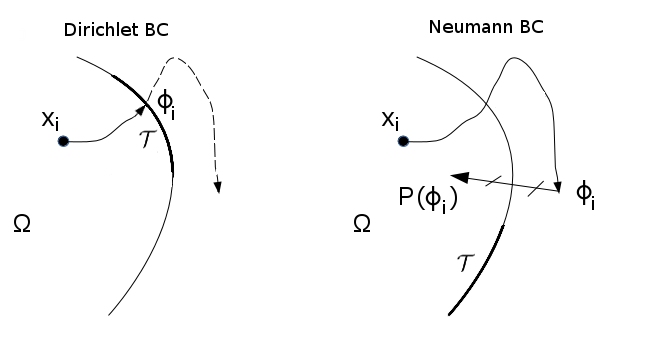}
\caption{Sketch of the implementation of the boundary conditions when the characteristics leave the domain $\Omega$. } \label{proje}
\end{center}
\end{figure}

\subsection{A Semi-Lagrangian scheme for the Hamilton-Jacobi-Bellman equation}\label{hjbdiscretization}
In this section we consider a SL scheme to solve the HJB equation and thus to approximate the velocity field $-\nabla u(x,t)$.  In order to formally introduce  the scheme, we recall that the solution $u(x,t)$ of the HJB equation in \eqref{principaleq} depends on $t$ only because of the presence of $m(x,t)$ on the r.h.s. of the equation, which is supposed to be given.  In the entire section  we fix $t>0$. Rewriting the quadratic term in its Legendre-Fenchel transfom, the solution $u(x,t)$ of the HJB equation is the  unique classical solution of (see \cite[Chapter 4]{FleSon92})
\begin{align}\label{hjbwithinf}
&\inf_{\alpha \in \RR^{d}}\left\{ \half |\alpha|^{2} -  \alpha\cdot \nabla u(x)  +  \eps \Delta u(x)\right\}+ \frac{1}{2f^2(m(x,t))+\delta}= 0 \; \: \; \; \mbox{if $x\in \Om$}, \\ &u (x)=g(x) \; \; \qquad\mbox{if $x\in \partial \Omega$}.\nonumber
\end{align}
Using that     the optimal $\alpha$ in the infimum of \eqref{hjbwithinf} is given by $\alpha= \nabla u (x,t)$, {setting} $\A:=\{ \alpha \in \RR^{d} \; ; \; |\alpha|\leq \|  \nabla u(\cdot,t)\|_{L^{\infty}}\}$ and $F(x,t):= 1/(2f^2(m(x,t))+\delta)$,  {system} \eqref{hjbwithinf} can be rewritten as 
\begin{align}\label{hjbwithinfmuwithA}
&\min_{\alpha \in \A}\left\{ \half |\alpha|^{2} -  \alpha\cdot \nabla u(x)  + \eps \Delta u(x)\right\}+ F(x,t)= 0, \; \; \; \mbox{if $x\in \Omega$},\\
& u(x)=g(x) \qquad \mbox{if $x\in \partial \Omega$},\nonumber
\end{align}
where, for {notation} convenience, we have suppressed the dependence of $u$ on $t$, since the time variable is fixed. \\
In order to numerically solve \eqref{hjbwithinfmuwithA}, let us introduce a {\it fictive} time step $h>0$.  Recalling the definition of $y_{x,\alpha}$ (trajectory) and $\tau_{x,\alpha}$ (first time that the trajectory leaves the domain) in \eqref{fictivedynamics}  and noting that  $\tau_{x,\alpha}\wedge h$ is a stopping time, then for any admissible $\alpha$ and $x\in \Omega$,    It\^o's formula yields
\begin{multline*} \EE\left(u(y_{x,\alpha}(\tau_{x,\alpha} \wedge h))-u(x) \right)=\\ \EE\left( \int_{0}^{\tau_{x,\alpha} \wedge h} \alpha(s) \cdot \nabla u(y_{x,\alpha}(s)) \dd s + \eps \int_{0}^{\tau_{x,\alpha} \wedge h} \Delta u(y_{x,\alpha}(s)) \dd s\right).
\end{multline*} 
Formally, if we discretize the r.h.s. of the above expression as 
$$\EE\left(\tau_{x,\alpha} \wedge h\right) \left( \alpha \cdot  \nabla u(x)+  \eps\Delta u(x)\right)$$ (where $\alpha \in \A$ is arbitrary) and setting $\tilde{h}:= \tau_{x,\alpha} \wedge h$, we get the following approximation of \eqref{hjbwithinfmuwithA} 
\begin{align}\label{approximationHJB}
&\tilde{u} (x)= \min_{\alpha\in \A} \EE\left(  \tilde{u}(y_{x,\alpha}(\tilde{h})) +\frac{\tilde{h}}{2}|\alpha|^{2} +\tilde{h} F (x,t)\right) \; \; \; \mbox{if $x\in \Omega$}, \hspace{0.3cm} \\
&\tilde{u} (x)=g(x) \; \; \hspace{5cm}\qquad \mbox{if $x\in \partial \Omega$}.\nonumber
\end{align}
Then, it is natural to approximate $\tilde{h}$ as 
$$\hat{h}:=\inf\{ \gamma>0 \; ;  \; x +\gamma \alpha+  \sqrt{2  d \eps  \gamma }Z \in \partial \Omega\} \wedge h,$$
where $Z$ is a random vector defined as in \eqref{defZ}. Denoting for all $\ell=1,\hdots,d$ 
$$
\ba{c}
\hat{h}_{x,\alpha}^{\ell,+}:=  \inf\{ \gamma>0 \; ;  \; x+\gamma \alpha+  \sqrt{2  d\eps \gamma}{\bf e}_{\ell} \in \partial \Omega\} \wedge h, \\[6pt]
 \hat{h}_{x,\alpha}^{\ell,-}:=  \inf\{ \gamma>0 \; ;  \; x+\gamma \alpha-  \sqrt{2  d\eps  \gamma}{\bf e}_{\ell} \in \partial \Omega\} \wedge h,
\\[6pt]
y_{x,\alpha}^{\ell,+}:= x+\hat{h}_{x,\alpha}^{\ell,+}\alpha +   \sqrt{2d\eps \hat{h}_{x,\alpha}^{\ell,+} }{\bf e}_{\ell},\; \; \;  \hspace{0.4cm}y_{x,\alpha}^{\ell,-}:= x+\hat{h}_{x,\alpha}^{\ell,-}\alpha -   \sqrt{2d \eps \hat{h}_{x,\alpha}^{\ell,-} }{\bf e}_{\ell}, \ea$$ \normalsize
and setting $\hat{h}_{x,\alpha}^{\ell,\pm}=\hat{h}_{x,\alpha}^{\ell,+}+\hat{h}_{x,\alpha}^{\ell,-}$, we get the following approximation of \eqref{approximationHJB} 
\be\ba{rcl}
\hat{u} (x)&=& \min\limits_{\alpha\in \A} \left\{\frac{1}{2d} \sum_{\ell=1}^{d}
\left[ \hat{u} (y_{x,\alpha}^{\ell,+})+ \hat{u} (y_{x,\alpha}^{\ell,-})+\frac{\hat{h}_{x,\alpha}^{\ell,\pm}}{2}|\alpha|^{2} +\hat{h}_{x,\alpha}^{\ell,\pm} F (x,t)\right] \right\},\\[6pt]  \label{approximationHJB1} 
\; & \; &  \hspace{6.2cm}\mbox{for  $x\in \Omega$}, \\[6pt]
\hat{u} (x)&=&g(x) \; \; \hspace{5.32cm}\mbox{for $x\in \partial \Omega$}.\ea
\ee 
Finally, in order to obtain the space discretization from \eqref{approximationHJB1}, given a space step $\Delta x >0$ we  interpolate $\hat u$ in space using the operator $I$ defined in \eqref{definterpolation}. Given $i \in \mathcal{G}_{\Delta x}(\ov \Omega)$ let us  set $y_{i,\alpha}^{\ell,+}:=y_{x_{i},\alpha}^{\ell,+}$ and  $y_{i,\alpha}^{\ell,-}:=y_{x_{i},\alpha}^{\ell,-}$ with analogous definitions for $\hat{h}_{i,\alpha}^{\ell,+}$, $\hat{h}_{i,\alpha}^{\ell,-}$  and $\hat{h}_{i,\alpha}^{\ell,\pm}$. For $v\in \B(\mathcal{G}_{\Delta x}(\ov \Omega))$ define  
$${W}(v,i):=\min_{\alpha\in \A} \left\{ \frac{1}{2d} \sum_{\ell=1}^{d} \left[I [v](y_{i,\alpha}^{\ell,+})+  I [v](y_{i,\alpha}^{\ell,-}) +    \frac{\hat{h}_{i,\alpha}^{\ell,\pm}}{2}|\alpha|^{2} +\hat{h}_{i,\alpha}^{\ell,\pm} F(x_i,t) \right]\right\}.$$ 
Thus, interpolating the unknown in the formula for $\hat{u}(x)$, we get the following fully-discrete scheme to approximate the solution $u $  of \eqref{hjbwithinfmuwithA}: \smallskip\\ 
Find $u  \in  B(\mathcal{G}_{\Delta x}(\ov{\Om})) $ such that 
\begin{equation}\label{SL}
\ba{rcl}
u_i&=&{W}(u,i) \hspace{0.3cm} \mbox{for all } i \in \mathcal{G}_{\Delta x}(\Omega),\; \\[6pt]
   u_{i}&=&g(x_{i}) \hspace{0.5cm}  \mbox{for all } i \in   \mathcal{G}_{\Delta x}(\partial \Omega).
\ea
\end{equation}
Note that, alternatively,  problem \eqref{SL} can be written in the form: \smallskip\\ 
Find $u \in  B(\mathcal{G}_{\Delta x}(\ov{\Om})) $ such that 
\begin{equation}\label{Shorward}
0= \max_{\alpha \in \A} \left\{(B^{\alpha}u)_{i}- c(\alpha)_{i}\right\} \hspace{0.5cm} \forall \; i \in \G_{\Delta x}(\ov{\Om}),   
\end{equation}
where {the linear operator} $B^\alpha:B(\mathcal{G}_{\Delta x}(\ov{\Om}))\to B(\mathcal{G}_{\Delta x}(\ov{\Om}))$  {and} $c(\alpha)$ are defined as following: 
\begin{align*}
 (B^{\alpha}v)_{i}:=& v_{i} - \frac{1}{2d}\sum_{j \in \G_{\Delta x}(\ov{\Om}), \; \ell=1, \dots, d}\left[\beta_{j}(y_{i,\alpha}^{\ell,+})+\beta_{j}(y_{i,\alpha}^{\ell,-})\right]v_j, \\
 c(\alpha)_i:=&   \frac{1}{2d}\sum_{\ell=1}^{d}\left[\half \hat{h}_{x_i,\alpha}^{\ell,\pm}|\alpha|^{2}+\hat{h}_{x_i,\alpha}^{\ell,\pm}F(x_i,t)\right],
\end{align*}
for every $i \in \G_{\Delta x}(\Om)$ {and  $v\in B(\mathcal{G}_{\Delta x}(\ov{\Om}))$}, and 
$$ (B^{\alpha}v)_{i}:= v_{i}, \hspace{0.5cm} c(\alpha)_i=g(x_{i}) \hspace{0.4cm} \mbox{for all } \; i\in \G_{\Delta x}(\partial{\Om}).
$$
In the following result we prove existence and uniqueness of a solution of \eqref{Shorward} using the \emph{policy iteration method}, which is also an efficient method to compute the solution (see Section \ref{simulations}).
%
%
%
\begin{lemma}\label{proofhoward} Problem \eqref{SL} admits a unique solution $u$. In addition, for   $\alpha^0$ arbitrary in $\A$, the sequence  defined by
\begin{align}\label{sequencehorward} 
& v^n:= (B^{\alpha^{n-1}})^{-1}c(\alpha^{n-1}), \\
& \alpha^n \in \mbox{{\rm argmax}}_{\alpha\in \A}\left\{B^{\alpha}v^{n}-c(\alpha)\right\}, \; \; \; \; n\geq 1,\nonumber
\end{align}
is well-defined. Furthermore, for all $i \in \G_{\Delta x}(\ov{\Om})$, the sequence $v^n_{i}$ is   non-increasing, converges to $u_{i}$, and  {any} limit point $\ov \alpha$ of $\alpha^n$ (there exists at least one) satisfies 
\begin{multline}\label{optimalcontrolconvergece}
0= (B^{\ov \alpha }u )_{i}- c(\ov \alpha )_{i}
=\max_{\alpha \in \A} \left\{(B^{\alpha}u )_{i}- c(\alpha )_{i}\right\} \hspace{0.5cm} \forall \; i \in \G_{\Delta x}(\ov{\Om}).
\end{multline}
\end{lemma}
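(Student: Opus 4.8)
The plan is to recognize problem \eqref{Shorward} as a fixed-point equation for a monotone, contractive operator and to analyze the policy iteration \eqref{sequencehorward} via the classical Howard scheme of reasoning. First I would check that for every fixed $\alpha \in \A$ the linear operator $B^\alpha$ is invertible with nonnegative inverse. The key structural fact is that the barycentric weights $\tfrac{1}{2d}[\beta_j(y_{i,\alpha}^{\ell,+})+\beta_j(y_{i,\alpha}^{\ell,-})]$ are nonnegative and, summed over $j$ and $\ell$, equal $\tfrac{1}{2d}\sum_\ell (\sum_j \beta_j(y^{\ell,+}_{i,\alpha}) + \sum_j \beta_j(y^{\ell,-}_{i,\alpha})) = 1$, using the partition-of-unity property $\sum_j\beta_j\equiv 1$. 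Hence $B^\alpha = \mathrm{Id} - Q^\alpha$ with $Q^\alpha$ substochastic (row sums $\le 1$, and strictly $<1$ at boundary nodes where $\beta_j$ is evaluated partly outside $\ov\Omega$ — or, more simply, boundary rows are the identity). The presence of the Dirichlet nodes makes $Q^\alpha$ "sub-stochastic with leak", so that $\|(Q^\alpha)^n\|_\infty \to 0$; I would spell this out by noting that a trajectory started at any interior node reaches $\partial\Omega$ in a bounded number of steps (each step moves a distance $\ge \sqrt{2d\eps h}$ up to the clipping at $\partial\Omega$), so the associated Markov chain is absorbed in finitely many steps, uniformly in $i$ and $\alpha$. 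Consequently $B^\alpha$ is invertible, $(B^\alpha)^{-1} = \sum_{n\ge 0}(Q^\alpha)^n \ge 0$, and the map $v \mapsto \max_\alpha\{Q^\alpha v + c(\alpha) - v + v\}$... more precisely the full operator $v\mapsto \min_\alpha\{Q^\alpha v + c(\alpha)\}$ (which is exactly $W(v,\cdot)$ on interior nodes and $g$ on boundary nodes) is monotone and a contraction-up-to-absorption; so \eqref{SL} has a unique solution $u$ by a standard fixed-point argument applied to a suitable power of the operator.

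Next I would turn to the policy iteration. Since each $B^{\alpha^{n-1}}$ is invertible, $v^n$ in \eqref{sequencehorward} is well-defined, and the $\mathrm{argmax}$ defining $\alpha^n$ is attained because $\A$ is compact and $\alpha\mapsto B^\alpha v^n - c(\alpha)$ is continuous (the maps $\alpha\mapsto \hat h^{\ell,\pm}_{x_i,\alpha}$, $\alpha\mapsto y^{\ell,\pm}_{i,\alpha}$ are continuous — this requires a small remark about the continuity of the exit-time $\inf\{\gamma>0: x+\gamma\alpha\pm\sqrt{2d\eps\gamma}\,\mathbf e_\ell\in\partial\Omega\}\wedge h$ in $\alpha$, which holds on the square $\Omega=(0,L)^d$). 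The monotonicity of the sequence $v^n$ is the heart of the argument: by definition of $\alpha^n$ as a maximizer, $B^{\alpha^{n}}v^{n} - c(\alpha^{n}) \ge B^{\alpha^{n-1}}v^{n} - c(\alpha^{n-1}) = 0$, i.e. $v^n \ge Q^{\alpha^n}v^n + c(\alpha^n) - 0$... rearranged, $c(\alpha^n) \le (\mathrm{Id} - Q^{\alpha^n})v^n = B^{\alpha^n}v^n$; applying the nonnegative operator $(B^{\alpha^n})^{-1}$ gives $v^{n+1} = (B^{\alpha^n})^{-1}c(\alpha^n) \le v^n$. Thus $(v^n_i)$ is non-increasing for every $i$. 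It is also bounded below, because $v^n \ge u$: indeed $B^{\alpha^n}u - c(\alpha^n)\le \max_\alpha\{B^\alpha u - c(\alpha)\} = 0 = B^{\alpha^n}v^{n+1}-c(\alpha^n)$, so $B^{\alpha^n}(u - v^{n+1})\le 0$, and the nonnegativity of $(B^{\alpha^n})^{-1}$ yields $u\le v^{n+1}$. Hence $v^n \downarrow v^\infty \ge u$ pointwise.

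It remains to identify $v^\infty$ with $u$ and to pass to the limit in the controls. By compactness of $\A$ (and finitely many nodes) there is a subsequence along which $\alpha^n \to \ov\alpha$; along it $Q^{\alpha^n}\to Q^{\ov\alpha}$ and $c(\alpha^n)\to c(\ov\alpha)$ by continuity. Passing to the limit in $B^{\alpha^{n-1}}v^n = c(\alpha^{n-1})$ gives $B^{\ov\alpha}v^\infty = c(\ov\alpha)$, i.e. $0 = B^{\ov\alpha}v^\infty - c(\ov\alpha) \le \max_\alpha\{B^\alpha v^\infty - c(\alpha)\}$. For the reverse inequality, from $B^{\alpha^n}v^n - c(\alpha^n)\ge B^\alpha v^n - c(\alpha)$ for every $\alpha$, letting $n\to\infty$ along the subsequence gives $0 \ge B^\alpha v^\infty - c(\alpha)$ for all $\alpha$, hence $\max_\alpha\{B^\alpha v^\infty - c(\alpha)\} = 0$. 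Therefore $v^\infty$ solves \eqref{Shorward}; by the uniqueness established above, $v^\infty = u$, and \eqref{optimalcontrolconvergece} follows for the limit point $\ov\alpha$. The main obstacle is the careful justification that $(B^\alpha)^{-1}$ exists and is nonnegative \emph{uniformly} in $\alpha$ — i.e. the "absorption in finitely many steps" estimate that replaces the usual strict-discount contraction; once that is in place the rest is the standard Howard argument. A secondary technical point worth a line is the continuity of $\alpha \mapsto \hat h^{\ell,\pm}_{x_i,\alpha}$, needed both for attainment of the $\mathrm{argmax}$ and for the limit passage.
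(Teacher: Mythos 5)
Your proposal is correct and follows essentially the same route as the paper: both arguments hinge on the observation that for $h$ small the diffusive displacement $\pm\sqrt{2d\eps h}$ dominates the drift uniformly over the compact set $\A$, so that from every interior node the discrete characteristics reach a Dirichlet node, which yields the monotonicity and invertibility of $B^{\alpha}$, and both then run the classical policy-iteration (Howard) argument. The only differences are presentational: you obtain the nonnegativity of $(B^{\alpha})^{-1}$ via the Neumann series of the substochastic part $\mathrm{Id}-B^{\alpha}$ and write out the Howard inequalities (monotone decrease, lower bound $v^{n}\geq u$, passage to the limit) explicitly, whereas the paper invokes the notion of a matrix of positive type and delegates these classical steps to the cited references.
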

\begin{proof}
The key point is to show that $B^\alpha$ is a monotone matrix for every $\alpha\in \A$. Indeed, we will show that $B^\alpha$ is of {\it positive type} (see \cite[Definition 6.4]{axelssonbook}), which, applied to our problem, means that in the directed graph associated to $B^\alpha$ from every node associated to an index in $\G_{\Delta x}(\Omega)$ there {exits} a path to a node associated to an index in $\G_{\Delta x}(\partial \Omega)$. It is well known that this property implies the monotonicity of $B^\alpha$ (see \cite[Theorem 6.5]{axelssonbook}).

\noindent Note that if we set $({\bf 1})_i:=1$ for all $i\in \G_{\Delta x}(\ov{\Om})$, we have that $(B^{\alpha}{\bf 1})_{i}=0$ if $i\in \G_{\Delta x}(\Om)$ and $(B^{\alpha}{\bf 1})_{i}=1$ if $i\in \G_{\Delta x}(\partial \Om)$. 
Since $\A$ is compact, if $h$ is small enough, the dominant terms in the definitions of $y_{i,\alpha}^{\ell,+}$ and $y_{i,\alpha}^{\ell,-}$ are $ \sqrt{2d\eps{h}}$ and $- \sqrt{2d\eps{h}}$, respectively, independently of $\alpha$. Therefore, starting from any point $i \in \G_{\Delta x}(\Om)$, there exists a sequence of indexes $i_0,..., i_m \in \G_{\Delta x}(\ov{\Om})$,  $\ell \in \{1,\hdots, d\}$ and $s\in \{+,-\}$   such that  $i_0=i$, $i_m \in \G_{\Delta x}(\partial \Omega)$ and  $\beta_{i_{j+1}}(y_{i_j,\alpha}^{\ell,s})>0$ for all $j=0,\hdots, m-1$ (indeed, we can choose the sequence $i_0, \hdots, i_m$ to be a subset of the set of indexes describing  the shortest path from $i$ to $\G_{\Delta x}(\partial \Om)$). Thus we have { shown} that $B^\alpha$ is of positive type and so it is monotone and, in particular, invertible. As a consequence of the invertibility,   the sequences $(\alpha^n, v^{n})$ in \eqref{sequencehorward} are well-defined. The proof  of the remaining assertions present no difficulties and are by now classical. We only sketch the main arguments and refer the reader  to  \cite{puterman1979convergence}, \cite{santos2004convergence} for detailed proofs. The existence of a solution of   \eqref{Shorward} can be deduced from the convergence of the sequence $v_i^{n}$ which follows from the fact that it is a non-increasing sequence (since $B^{\alpha}$ is monotone) and bounded   (which is a consequence of $\A$ being compact and $B^{\alpha}$ and $c(\alpha)$ being continuous w.r.t. $\alpha$). The uniqueness is an easy consequence of the fact that $B^{\alpha}$ is monotone while \eqref{optimalcontrolconvergece} follows directly using that $\A$ is compact (and so the sequence $\alpha^n$ has at least one converging subsequence) and the convergence of $v^{n}$. 
\hfill \small $\blacksquare$ \normalsize
\end{proof} 
\noindent 
\begin{remark}[Interpretation of the scheme as a Markov decision problem] Analogously to the stochastic control interpretation of the problem  in the continuous case sketched in Section \ref{prelim}, it is not difficult to see that  the scheme \eqref{SL} corresponds to the Bellman equation associated to the controlled Markov decision problem, for  $ {\hat{\imath}}$ in $\G_{\Delta x}(\ov{\Om})$
\be\label{generalizationshortestpath}
u_{\hat{\imath}}:=\inf  \left\{ \lim_{N\to \infty} \EE\left(\sum_{k=0}^{N-1}c(y_{\hat{\imath},k},\alpha_k(y_{\hat{\imath},k}))\right) \; ; \; 
 \; \mbox{over  $\alpha_k: \mathcal{G}_{\Delta x}(\ov{\Om}) \to \A$}\right\},\ee
 
 \noindent where $c: \G_{\Delta x}(\ov{\Om}) \times \A \to \RR$ is defined as 

$$c(j,\alpha):= \left\{\ba{ll}  \frac{1}{2d} \sum_{\ell=1}^{d}\hat{h}_{x_{j},\alpha}^{\ell,\pm}\left(\half|\alpha |^2+F(x_{j},t)\right)  & \mbox{if }  j \in \G_{\Delta x}(\Omega), \\[6pt]
								g(x_j)& \mbox{if }  j \in \G_{\Delta x}(\partial \Omega),\ea\right.$$								
and the Markov chain  $(y_{\hat{\imath},k})_{k\geq 0}$ in $\G_{\Delta x}(\ov{\Om})$, has   transition probabilities $\{p_{i,j} \; ; \; i, j \in \G_{\Delta x}(\ov{\Om})\}$, depending on the choice of the policies $(\alpha_{k})_{k\geq 0}$  and the initial distribution $p^0$,  given by
$$p_{i,j}= \frac{1}{2d} \sum_{\ell=1}^{d}\left[ \beta_{j}(y_{x_{i},\alpha}^{\ell,+})+\beta_{j}(y_{x_{i},\alpha}^{\ell,-})\right], \hspace{0.3cm} p_{j}^0= \delta_{\hat{\imath},j}$$
where $\delta_{i,j}=1$ if $i=j$ and $\delta_{i,j}=0$, otherwise. 
Note that problem \eqref{generalizationshortestpath} is a generalization of the well-known stochastic shortest path problem, for which the existence of a unique solution of the Bellman equation is well-known under a reachability condition over the target states {\rm(}see {\rm \cite[Chapter 7]{Ber95}} and Proposition 7.2.1 for a detailed proof of the aforementioned result{\rm)}.  This reachability condition corresponds exactly to a probabilistic reformulation of the property stating that $B^\alpha$ is of positive type (see the proof of Lemma \ref{proofhoward}).
\end{remark}
Finally, let us point out that following exactly the computations in  \cite[Proposition 1 (iii)]{CS15} (in a time dependent framework) we have the following {\it consistency} property of the scheme: let  $(\Delta x_{n}, h_{n})\to 0$ with $\Delta x_n^2=o({h_n})$ and consider a sequence of grid points $x_{i_{n}} \to x\in \Om$. Then, for every 
 $\phi \in  C^{\infty}\left(\Om\right)$, we have
 
\begin{equation*}\label{consistency} 
\lim\limits_{n\to \infty}  \frac{1}{h_n} \left[u_{i_n}- {W} (u,i_n)\right]   = -\eps\Delta \phi(x) +\half |\nabla \phi(x)|^{2}  -F(x,t). 
\end{equation*}

\subsection{A Semi-Lagrangian scheme for the system }\label{semilagrangiancomplete}
Now we have all the elements to present the numerical scheme for  the complete system \eqref{principaleq}-\eqref{BC}.
We use  scheme \eqref{schemefpwithboundary} to approximate the nonlinear  FP equation with 
drift $b(x,m,t)$ defined in \eqref{driftb}. In this case, the drift  depends also on the gradient  $\nabla u$ of the value function $u$, solution of the HJB equation,  which depends implicitly on $m(x,t)$.
We  proceed iteratively, in the following way: given the discrete measure $m_{k}$  at time $t_k$ ($k=0,\hdots, N-1$) , we  compute the discrete value function $u_k \in B(\mathcal{G}_{\Delta x}(\ov \Omega))$ by the scheme \eqref{SL} with
 $$
F(x_i,t_k):= 1/(2f^2(m_{i,k})+\delta), \qquad \delta>0.
$$ 
We 
denote by  $Du_{i,k}$  the discrete gradient, obtained by  centered finite differences of $u_{i,k}$ in in the {\it internal node} $x_i\in \Omega$ and by one side finite differences for the node on the boundary.
Then, we calculate $m_{k+1}$ with scheme \eqref{schemefpwithboundary} approximating the drift $\Phi^{\ell,\pm}_{j,k }[m_{j,k}]$ by 
$$
\Phi^{\ell,\pm}_{j,k}[m_{j,k}]\simeq x_{j}+\Delta t f^2\left( m_{j,k}\right)D u_{j,k}\pm  \sqrt{2d\eps \Delta t}  {\bf e}_{\ell}
$$
and we iterate the process until $k=N-1$.
\begin{remark} Note that the scheme is explicit in time, therefore the existence of a solution is a direct consequence of the construction, as opposite to analogous scheme for Mean Field Games (see \cite{CS12,CS13,CS15}) where the scheme is proved to be well-posed by a fixed-point argument. Again, this reflects at the discrete level the fact that in this model agents cannot anticipate the future behaviour of the crowd.
\end{remark} 

\section{Numerical simulations}\label{simulations}

In this section we illustrate the behavior of the solutions of system \eqref{principaleq} with various numerical experiments. The simulations are based on the SL
scheme presented in Section \ref{semilagrangiancomplete}. The most expensive task in the simulations corresponds to the computation of the solution of the HJB  at each time step.
This motivates a careful choice of the technique to compute the solution of \eqref{SL}. 
Despite the high efficiency of techniques  such as Fast Marching \cite{Set99,tsitsiklis1995} and Fast Sweeping methods \cite{Zhao05}, their applicability  is  essentially limited to first order equations. A  parallel algorithm for the numerical resolution of stationary  second order HJB equations has been proposed in \cite{CacFal15}. However the method is only efficient for sufficiently small diffusivities.
We follow a different but also well known strategy: the  \emph{policy iteration} method, as described in Lemma 1.
 This class of techniques is especially sensible to a good initial policy $ \alpha^0$ (see e.g. \cite{KAF13}).
In view of our scheme, presented in Section  \ref{semilagrangiancomplete}, a natural \emph{warm start} policy  at the time step $t_k$
 is the optimal policy $\ov \alpha$ obtained at the previous time step $t_{k-1}$, since we do not expect large variations of the density at each time step.\\
In order to compute the optimal policy  in \eqref{sequencehorward}, a common practice in the literature is to discretize the set $\A$ and select the minimizing policy  in this discrete set (see e.g. \cite{falcone06,KAF13}).
We proceed in this manner by choosing $n_\theta, n_\rho \in \NN$ and defining
\begin{equation*}
\alpha_{\rho,\theta}:=\rho(\cos\theta,\sin\theta), \qquad \theta\in\left\{i\frac{2\pi}{n_\theta},i=1,\dots, n_\theta \right\}, \; \rho\in\left\{ 0,1,\dots n_{\rho} \right\}.
\end{equation*}
In all the tests we set $n_\theta=32$, $n_{\rho}=4$. In the spirit of \cite{KAF13}, it could be also interesting to apply  more efficient minimization techniques to solve  \eqref{sequencehorward}.

\subsection{Exit scenario with two doors}
In our first example we study the exit behavior of a group of pedestrians from a room with two exits.
We are interested in how the regularization
parameter $\eps$ influences the splitting behavior of the crowd and  the evacuation time, i.e. the smallest time iteration $N_s$ such that $ m_{j,N_s} = 0$ for all {$j\in \G_{\Delta x}(\ov{\Om})$}. We set $\delta:=10^{-6}$ throughout this section.\\ 
The room is represented by the domain $\Omega:=(0,1)^2$; the initial mass has  a uniform density located at the center of the domain: 
\begin{equation*}
m_0(x):=\left\{
\begin{array}{ll}
M_0 \qquad & x\in [1/3,2/3]^2,\\
0 & \text{otherwise},
\end{array}\right.
\end{equation*}
where $M_0\in\RR_+$. 
 The discretization parameters $\Delta x$ and $\Delta t$, introduced in Section 3, and the parameter $h$, introduced in sub-Section 3.2, are set to $\Delta x=\Delta t=h=0.08$.
The uniform initial mass is  set to $M_0=0.7$ and the diffusion coefficient $\eps$ to $\eps=0.001$.
The set $\T$ corresponds to two exits of different width on opposite sides of the boundary: 
\begin{equation*}
\T:= \{0\}\times [0.13, 0.27]\cup \{1\}\times [0.49, 0.51].
\end{equation*}
The position of the two exits induces an asymmetric splitting of the crowd (see Figure  \ref{fig1} (left)).
Initially, a large part of the crowd chooses to move to the right exit. After a while this exit gets congested, inducing a part of the population to change objective. 
They decide to move towards the left exit instead of waiting at the right one, as shown in the right plot of Figure \ref{fig1}.  
We would like to mention that this 'turning-behavior' cannot be observed in a mean field game model, since in this case agents anticipate the future behavior of the crowd and would wait or take the
other exit right away (see \cite{CPT15}, Section 4.1 for some comparative tests). 

\begin{figure}[th]
\begin{center}
\includegraphics[height=4.3cm]{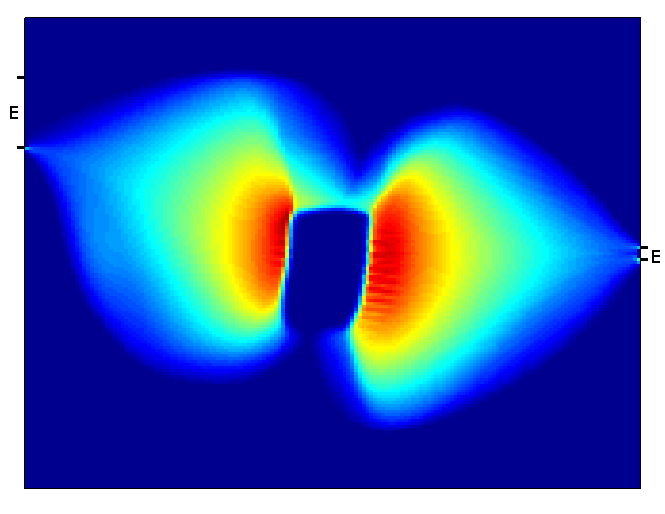} 
\hspace{0.5cm}
\includegraphics[height=4.3cm]{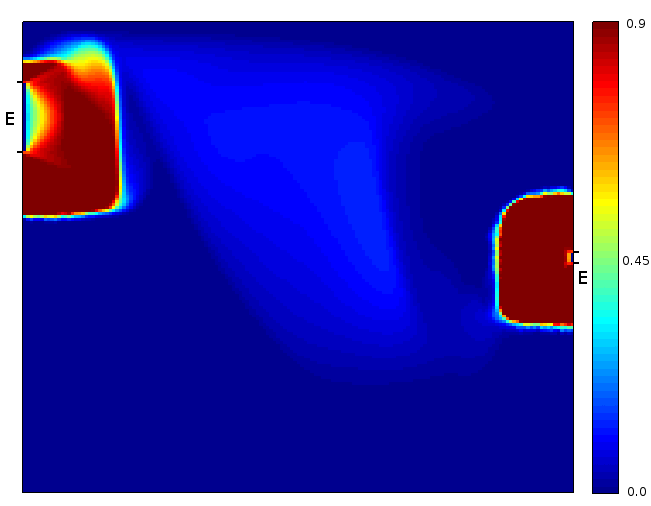}
\caption{Density contour lines at time $t=0.3$ (left) and $t=1.2$ (right). The exits are marked with the letter ``E''.} \label{fig1}
\end{center}
\end{figure}

\noindent In Table \ref{tab:test1}, we show the evacuation time for different values of $\eps$, as well as  the percentage of individuals exiting from the right or the left door. 
The diffusion influences the evacuation
time significantly: large values of $\eps$ prevent large pedestrian densities thus the effect of congestion is less evident; at the same time it increases the evacuation times. If  $\eps > 0.01$ the total mass of people that change their first objective is progressively reduced until it disappears (see Figure \ref{fig11}, where this effect is no longer observed). The reduction of the diffusive effects induces an initial decrease of the evacuation time that raises again because of congestion, for very small values of $\eps$.

\begin{table}[th]\label{tab:test1}
\begin{small}
\begin{center}
\begin{tabular}[b]{c|c|c|c}
        $\eps$    &  $N_{s}\Delta t$ & left exit & right exit \\
\hline\hline
{\boldmath{$4\cdot 10^{-2}$}}  & 5.08& 54.32 \% & 45.68 \%\\
\boldmath $2\cdot 10^{-2}$ & 4.62 & 53.72 \%& 46.27 \%\\
\boldmath $1\cdot 10^{-2}$  &  3.85 &53.40 \%& 46.59 \%\\
\boldmath $5\cdot 10^{-3}$  &  4.00 & 52.28 \% & 47.71 \%\\
\boldmath $2\cdot 10^{-3}$ & 4.10 & 52.17 \% & 47.82 \%\\
\boldmath $1\cdot 10^{-3}$ & 4.32 & 51.85 \% & 48.14 \%\\
\boldmath $5\cdot 10^{-4}$ & 4.77 & 51.40 \% & 48.59 \%\\
\end{tabular} \caption{Evacuation time and mass split for different values of $\eps$. }
\end{center}
\end{small}
\end{table}
\begin{figure}[th]
\centering
\includegraphics[width=0.45\textwidth]{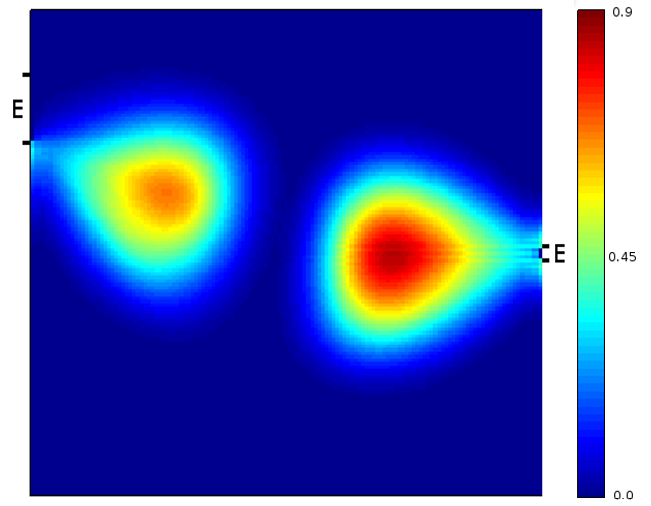}
\caption{Density contour lines at time $t=0.3s$ with $\eps=0.1$.} \label{fig11}
\end{figure}

\subsection{Exit scenario with barriers}

In this test we investigate the influence of barriers, usually called \emph{turnstiles}, on the evacuation time. Obviously the shape and the size of such barriers influence the dynamics of the system. We focus on the simple case where the barriers have a rectangular shape and fixed dimension. We vary the distance between them and consequently their number. 
We consider  the subset 
\begin{equation*}
\Gamma:=\left\{ x\in[0,1]^2\hbox{ s.t. } \min(0.1-|x-0.5|,0.02-|y-cs|)\geq 0,\, s\in \NN\cap[-4,4]\right\}
\end{equation*}
defined for a fixed $c\in\RR_+$. The choices $c=0.1,0.2,0.3$ correspond to $9, 7, 5$ barriers,  respectively resembling a fine/medium/coarse allocation. In this setup the computational domain corresponds to $\Omega:=[0,1]^2\setminus \Gamma$. 
We point out that in this case, because of the non convexity of the domain, the operator $P$ defined in \eqref{Projection} may be not well defined.  We overcome this by simply choosing the closer projection to the starting point of the discrete characteristic. \\
\noindent In the HJB equation we impose Dirichlet boundary conditions on $\partial \Gamma$   to preserve the regularity of the solution on the boundary. This point is related to reachability issues also discussed in \cite{aubin2009viability}. A practical approach to implement these conditions corresponds to introducing a narrow band of ``ghost nodes'' (see \cite[Section 5.1]{cristiani2007fast}) close to the barriers and setting a constant value $G$ such that $G> u_i$ for all $i\in \G_{\Delta x}(\ov \Omega)$ at these nodes. 

\noindent We start the simulation with an uniform distribution of individuals on the left side of the barriers which wants to exit through
$$
\T:= \{1\}\times [0.45, 0.55].
$$
The initial distribution corresponds to
$$
m_0(x):=\left\{
\begin{array}{ll}
M_0 \qquad & x\in [0.15, 0.35]\times [0.2,0.8],\\
0 & \text{ otherwise}.
\end{array}\right.
$$

\noindent We would like to understand how the barriers  and  the regularization parameter $\eps$ effect the evacuation time.
The evolution of the density is illustrated in Figure \ref{fig2tes}, in the case $M_0=0.7$,  $c=0.1$ and a diffusion coefficient $\eps=0.001$. 

\begin{figure}[t]
\begin{center}
\includegraphics[height=4.1cm]{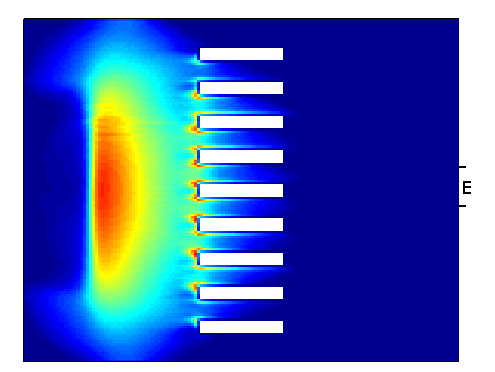} 
\hspace{0.3cm}
\includegraphics[height=4.1cm]{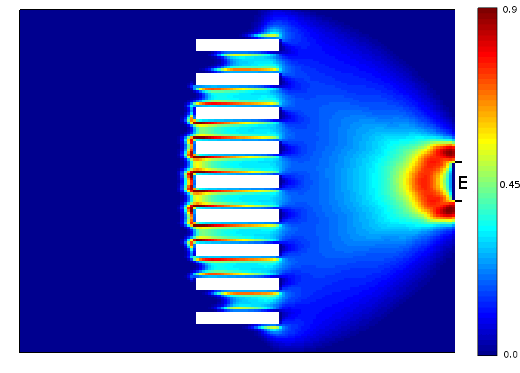}
\caption{Evolution of the density in test 2.  Contour lines at time $t=0.3$ and $t=1.2$. The exit is marked with an `E'.} \label{fig2tes}
\end{center}
\end{figure}

\noindent The number and the position of the turnstiles (determined by the parameter $c$) have a significant effect on the overall evacuation time.  In Table \ref{Tab1:test2} we observe the relation between exit time, fine/medium/coarse disposition of barriers  and diffusion parameter. The boxes highlight the shortest evacuation times for the different diffusivities. We observe that in the case of small diffusion a large number of turnstiles gives   the smallest evacuation time. This phenomenon disappears gradually for larger values of the diffusion parameter. 
The opposite behavior is observed when varying the total mass. In the case of a small total mass ($M_0=0.4,0.5$) high concentrations and congestion are less probable, and the configuration without barriers gives the lowest evacuation time. In the case of large initial masses, that is $M_0>0.5$, congested areas  start to appear. In this case the fine disposition of barriers improves the evacuation time (see Table \ref{Tab2:test2}).

\begin{table}[th]
\begin{small}
\centering
\begin{tabular}[b]{c|c|c|c|c}
 $\eps$   & $c=0.1$& $c=0.2$ & $c=0.3$  &  no barriers\\
\hline \hline
{\boldmath{$2\cdot 10^{-2}$} }&3.54   &  3.49  &  3.45  & \fbox{3.42}   \\
{\boldmath{$1\cdot 10^{-2}$}} &3.20 &   3.02   & \fbox{2.75} & 2.76     \\
{\boldmath{$5 \cdot 10^{-3}$}} &3.17   &  2.90 &  \fbox{2.70} & 2.85    \\
{\boldmath{$2\cdot 10^{-3}$}} &3.17    &   \fbox{3.02}   &  3.15  &3.32  \\
{\boldmath{$1\cdot 10^{-3}$}}  &3.50 &  \fbox{3.65}    &   3.70   & 3.75 \\
{\boldmath{$5\cdot 10^{-4}$}}  &\fbox{3.85}   &  4.12 & 4.3 & 4.25 \\
{\boldmath{$2\cdot 10^{-4}$}}  &\fbox{3.99} & 5.75& 5.85 & 5.85\\
{\boldmath{$1\cdot 10^{-4}$}} &\fbox{6.05} & 6.65&6.75 &  6.73\\
\end{tabular} 
\caption{Comparison of the evacuation time for different choices  of the coefficient $\eps$ and the parameter $c$.} \label{Tab1:test2}
\end{small}
\end{table}

\begin{table}[th]
\centering
\begin{small}
\begin{tabular}[b]{c|c|c|c|c}
        $M_0$   &  $c=0.1$& $c=0.2$ & $c=0.3$ & no barriers  \\
\hline \hline
\bfseries 1.0 & \fbox{5.05} & 5.25 &   5.30  &  5.40  \\
\bfseries 0.9 &  \fbox{4.55} & 4.85 & 5.12  & 5.55  \\
\bfseries 0.8  & \fbox{4.15} &  4.65 &  4.85  &  5.15  \\
\bfseries 0.7    &  \fbox{3.85} &  4.12   &  4.3   & 4.25 \\
\bfseries 0.6 & 3.55   & 3.45&    \fbox{3.42}   &   3.75    \\
\bfseries 0.5 &  3.35  &  3.22 &    3.25   &  \fbox{3.21}   \\
\bfseries 0.4 &   2.82  &  2.75  &      2.72     &    \fbox{2.45} \\
\end{tabular}
\caption{Comparison of the evacuation time for different choices of the parameter $c$ and varying $M_0$.} \label{Tab2:test2}
\end{small}
\end{table}

\subsection{The renovation of ``Les Halles'' in Paris}

In this last test, we consider a geometry representing a part of the  transport hub situated at   ``Les Halles'' in Paris. In this station we find connections  between underground lines, extra-city lines (RER) and buses. The center of the structure is the large transition zone shown in Figure \ref{fig3}. Recently the whole station was redesigned changing the shape of the hall. Furthermore, additional exits to improve the flow   of pedestrians have been added.

\begin{figure}[th]
\begin{center}
\includegraphics[height=4.2cm]{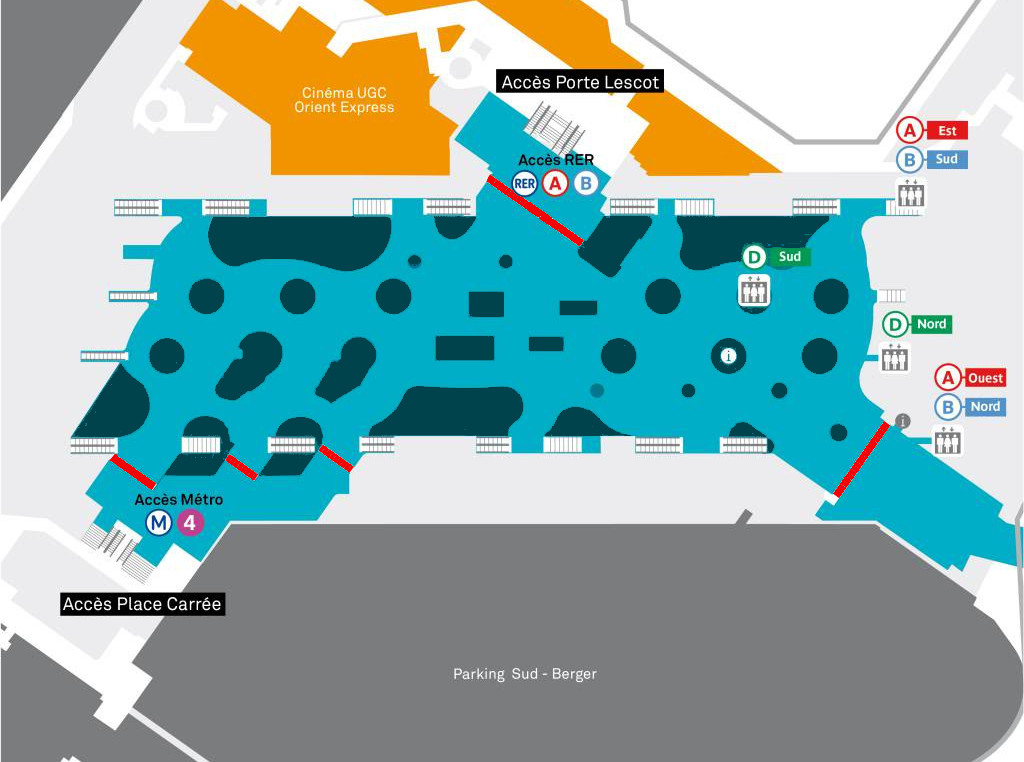} 
\hspace{0.3cm}
\includegraphics[height=4.2cm]{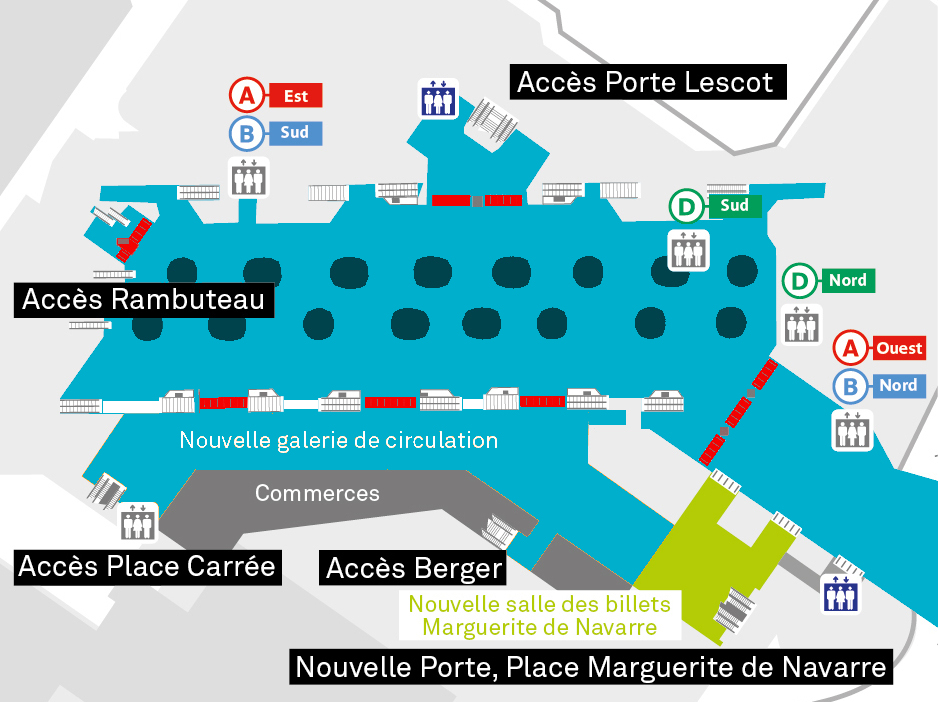}
\caption{Plan of the ``Salle d'exchange RER'' of Les Halles, Paris. In the 2014 (left) and in 2016 (right).} \label{fig3}
\end{center}
\end{figure}
\noindent In the following simulation, we want to model an evacuation scenario where a crowd of pedestrians placed in the center of the hall wants to reach the exits, located behind some turnstiles. 
 For simplicity we do not consider any additional   pedestrian inflows or the use of the elevators. 
We want to compare the evacuation capacity of the structure before and after the changes.\\

\noindent 
In our simulations, we set as computational domain $\Omega$    the light blue areas in Figure  \ref{fig3},   corresponding to the hall where  circulation is possible. The dark blue area corresponds to the obstacles. \\

\noindent In this test we set 
$$
f(x,m(x,t)):=\frac{1-m(x,t)}{\ell(x)},
$$
where $\ell(x)$ corresponds to the environmental running cost. It takes the value $2$ on the turnstiles  (in red in Figure  \ref{fig3}) and $1$ elsewhere. Even if in such case $f$ is discontinuous,  the simulations appear to be stable. We set also  $\eps:=0.001$.\\
\begin{figure}[t]
\begin{center}
\includegraphics[height=4.5cm,width=11cm]{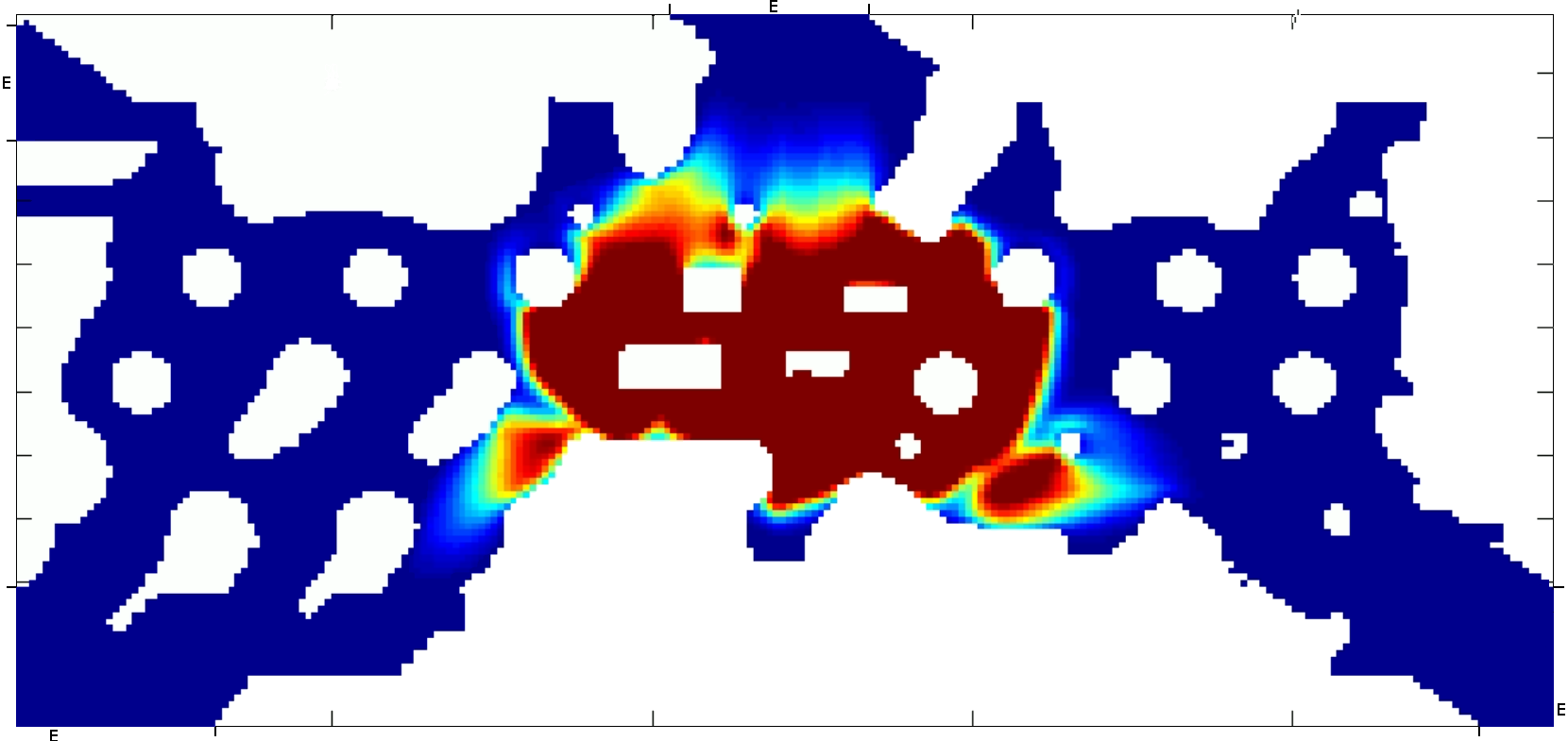}
\includegraphics[height=4.5cm,width=0.6cm]{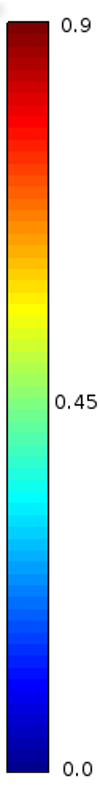}\\ 
\includegraphics[height=4.5cm,width=11cm]{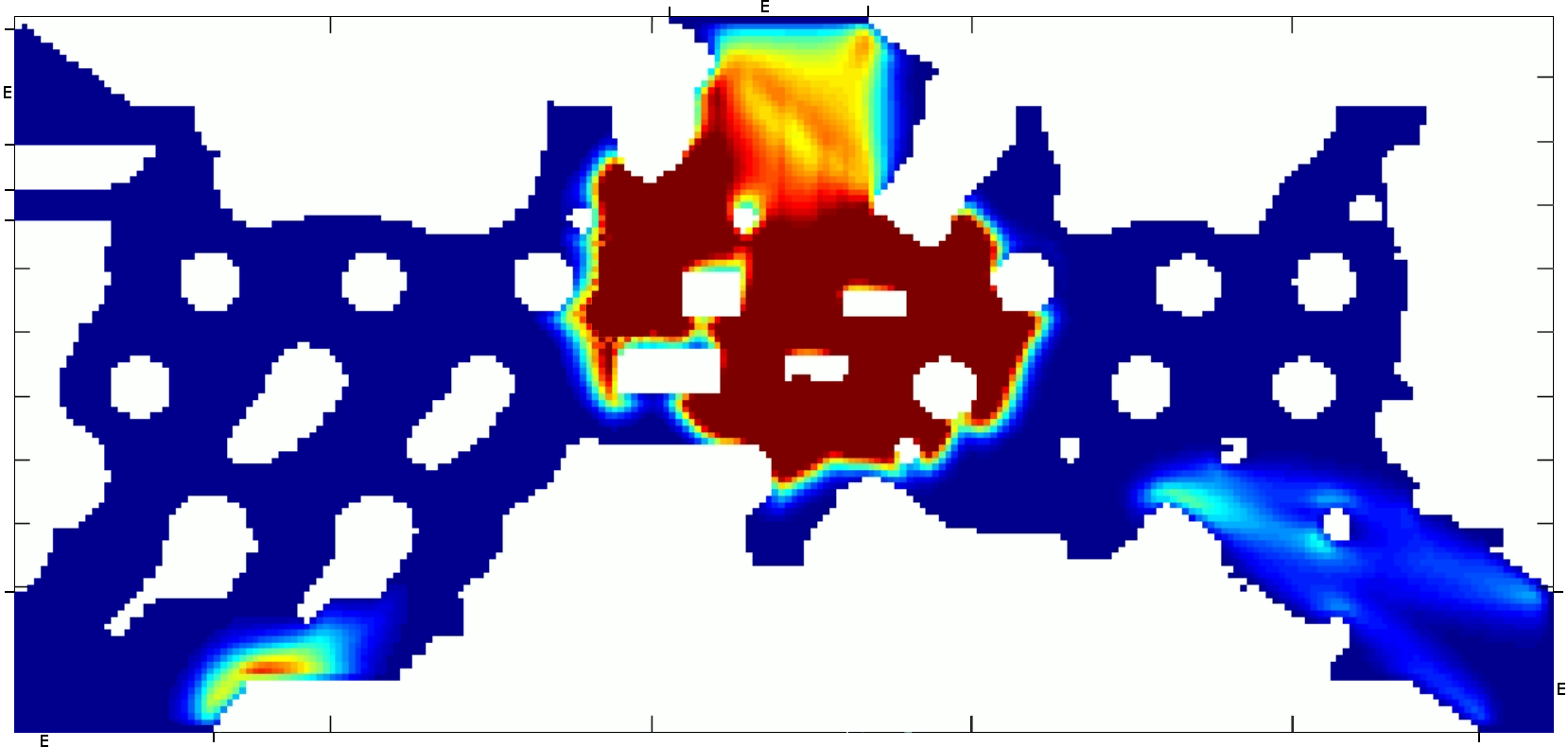}
\includegraphics[height=4.5cm,width=0.6cm]{bar.png}
\caption{Simulation for the structure in 2014. Contour lines at time $t=1s$ (top) $t=5s$ (bottom), with $M_0=0.7$. Exits are marked with an `E'.} \label{fig4}
\end{center}
\end{figure}  
The initial configuration of the crowd is chosen as
 \begin{equation*}
m_0(x):=\left\{
\begin{array}{ll}
M_0\qquad & x\in \left\{(\frac{d_1}{3},\frac{2 d_1}{3})\times (\frac{d_2}{3},\frac{2 d_2}{3})\right\}\cap \Omega,\\
0 & {\textrm{otherwise}}.
\end{array}\right.
\end{equation*}
where $[0,d_1]\times[0,d_2]$ is the smallest rectangle containing $\Om$.\\
In Figures \ref{fig4} and \ref{fig5}, we show the simulation of the evacuation  at time $t=1$ and $t=5 $ with $M_0=0.7$, respectively on the domain corresponding to the old structure and to the new one. 
The figures  illustrate the improvements in pedestrian circulation, mostly due to
a new exit added at the center of the lower boundary of the domain, which
 limits the  congestion created next to the main exit (located at the center of the top boundary). In Table \ref{fig66}, we compare the evacuation time for different initial masses $M_0$: as expected   the improvement of the evacuation capacities is more effective in the case of high pedestrian densities. 
\begin{figure}[th]
\begin{center}
\includegraphics[height=4.5cm,width=11cm]{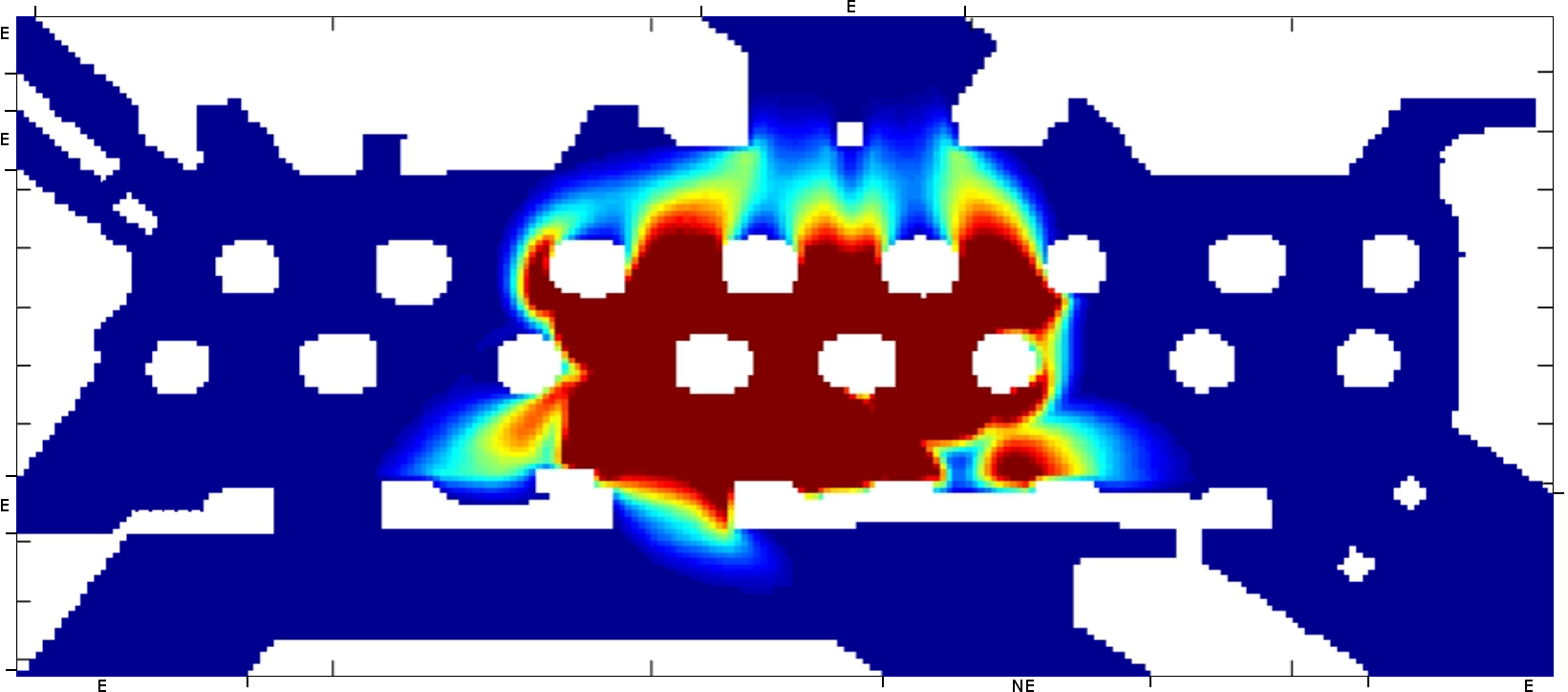}
\includegraphics[height=4.5cm,width=0.6cm]{bar.png}\\ 
\includegraphics[height=4.5cm,width=11cm]{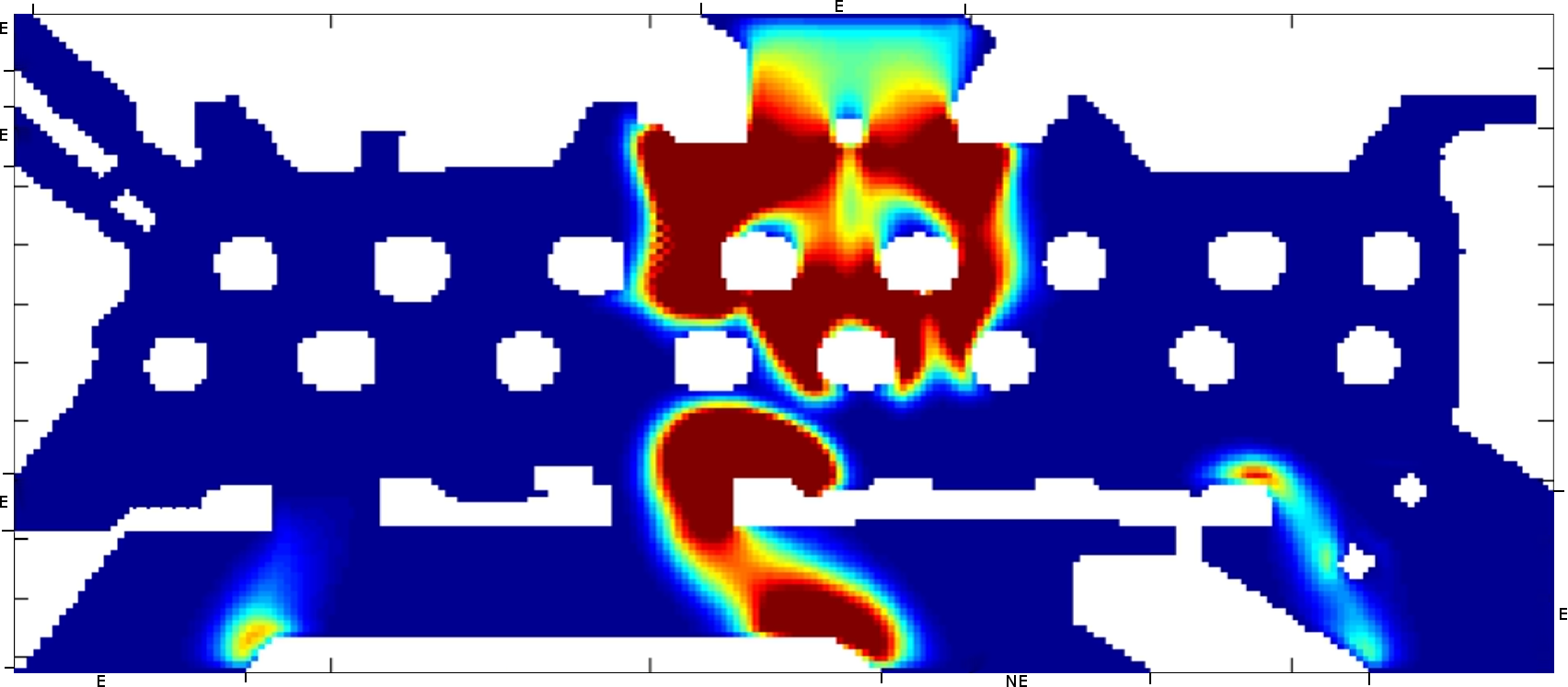}
\includegraphics[height=4.5cm,width=0.6cm]{bar.png}
\caption{Simulation for the structure in 2016. Contour lines at time $t=1s$ (top) $t=5s$ (bottom), with $M_0=0.7$. Exits are marked with an `E'; the new exit added is marked with `NE'.} \label{fig5}
\end{center}
\end{figure}
\begin{table}[th]
\begin{center}
\begin{tabular}[b]{c|c|c|c}
        $M_0$   &  2014 configuration & 2016 configuration & improvement \\
\hline \hline
\bfseries 0.9 &  9.65 & 7.22 & 25.2\% \\
\bfseries 0.8  & 8.92 & 6.86 & 19.8\%\\
\bfseries 0.7    &  8.13&  6.52 &  22.3\% \\
\bfseries 0.6 & 7.27  & 6.18 & 14.9\% \\
\bfseries 0.5 &  6.35 & 5.82 & 8.3\% \\
\bfseries 0.4 &  5.39  &  5.02  & 6.8\%\\
\end{tabular}
\end{center}
\caption{Comparison of the evacuation time for different values of the total initial mass.} \label{fig66}
\end{table}

\vspace{-2cm} 
\section*{Acknowledgements}
\noindent AF and MTW acknowledge financial support from the Austrian Academy of Sciences \"OAW via the New Frontiers Group NST-001.  {   EC and FJS benefited from the support of the ``FMJH Program Gaspard Monge in optimization and operation research'', and from the support to this program from EDF.}

\end{document}